\newtheorem{thm}{Theorem}[section]
\newtheorem{cor}[thm]{Corollary}
\newtheorem{prop}[thm]{Proposition}
\theoremstyle{definition}
\newtheorem{defn}[thm]{Definition}
\newtheorem{defns}[thm]{Definitions}
\newtheorem{exmp}[thm]{Example}
\newtheorem{notn}[thm]{Notation}
\theoremstyle{remark}
\newtheorem{rem}[thm]{Remark}
\newcommand{\finseq}[2]{s^F_{#1}(#2)}
\newcommand{\seq}[2]{s_{#1}(#2)}
\newcommand{\finseqeq}[2]{e^F_{#1}(#2)}
\newcommand{\indec}[2]{z^F_{#1}(#2)}
\newcommand{\seqeq}[2]{e_{#1}(#2)}
\newcommand{\pfin}[1]{\wp_{\mathrm{fin}}(#1)}
\newcommand{\pfinz}[1]{\wp'_{\mathrm{fin}}(#1)}
\newcommand{\pfinn}{\wp_{\mathrm{fin}}}
\newcommand{\pfinnz}{\wp'_{\mathrm{fin}}}
\DeclareMathOperator*{\dom}{dom}
\DeclareMathOperator*{\im}{im}
\let\c@equation\c@thm
\numberwithin{equation}{section}
\author{Harry Altman}
\title{Bounding finite-image sequences of length $\omega^k$}
\date{March 9, 2026}
\begin{document}
\begin{abstract}
Given a well-quasi-order $X$ and an ordinal $\alpha$, the set
$\finseq{\alpha}{X}$ of transfinite sequences on $X$ with length less than
$\alpha$ and with finite image is also a well-quasi-order, as proven by
Nash-Williams. \cite{NW}  Before Nash-Williams proved it for general $\alpha$,
however, it was proven for $\alpha<\omega^\omega$ by Erd\H{o}s and Rado.
\cite{ER}  In this paper, we revisit Erd\H{o}s and Rado's proof and improve upon
it, using it to obtain upper bounds on the maximum linearization of
$\finseq{\omega^k}{X}$ in terms of $k$ and $o(X)$, where $o(X)$ denotes the
maximum linearization of $X$.  We show that, for fixed $k$,
$o(\finseq{\omega^k}{X})$ is bounded above by a function which can roughly be
described as $(k+1)$-times exponential in $o(X)$.  We also show that, for $k\le
2$, this bound is not far from tight.
\end{abstract}

\maketitle

\section{Introduction}

Suppose $X$ is a well-quasi-order.  Then there is a natural way of making $X^*$,
the set of finite-length strings on $X$, into a well-quasi-order as well; this
is Higman's Lemma. \cite{Higman}  In 1965 C.~Nash-Williams extended this from
finite strings to transfinite sequences, with the qualification that one must
restrict attention to sequences with finite image; we only allow a sequence if
that sequence uses only finitely many symbols from $X$. \cite{NW} (Nash-Williams
also showed that this restriction can be removed if $X$ is a better-quasi-order
\cite{bqo}, but we will not consider this here.)

Given $X$ and an ordinal $\alpha$, we will define $\finseq{\alpha}{X}$ to be the
set of finite-image sequences on $X$ with length less than $\alpha$.  (So
$\finseq{\omega}{X}$ is just $X^*$, since a string of finite length must
certainly use only finitely many symbols.)  The question then arises: Can we
determine a nontrivial upper bound on the type of $\finseq{\alpha}{X}$, in terms
of $\alpha$ and the type of $X$?  (In the cases where we do not want a
finiteness restriction, we will write $\seq{\alpha}{X}$.)

Here by the \emph{type} of a well-quasi-order $X$ we mean the largest order type
of a linearization of $X$ (after quotienting out by equivalences); we denote it
$o(X)$.  This quantity was proven to exist by De Jongh and Parikh \cite{DJP},
who also showed that $o(X)$ can be characterized inductively as the smallest
ordinal greater than $o(Y)$ for any proper lower set $Y$ of $X$.  The theory has
been rediscovered several times since then; indeed the term ``type'' comes from
Kriz and Thomas. \cite{KT}  This quantity is also known by other names, such as
the maximum linearization.

Of course one could write down a trivial bound based on cardinality.  But this
is uninteresting.  The question then is if one can do better; and, moreover, if
one can come up with an upper bound that is at least reasonably tight.  Schmidt
claimed a nontrivial upper bound \cite{Schmidt}, but the proof has a hole which
has never, to this author's awareness, been repaired.\footnote{The gap may be
found on line 7 of page 34 of \cite{Schmidt}; the second occurrence of
$s_\gamma^F$ should simply be an $s_\gamma$, as the sequence need not have
finite image, but without this finite-image condition, the proof cannot
continue.  Of course, Theorem~6, which is what's being proven at that point, is
true regardless; but the argument is later referred back to to prove Lemma~9 as
part of the proof of the upper bound, and the argument is not valid, so Lemma~9
may not hold.  Thanks to Andreas Weiermann for pointing this out to me.}  So
this problem remains open.

Since the general problem seems difficult, we will restrict our attention to an
easier special case, the case of small $\alpha$.  It's already been mentioned
above that Higman's Lemma, the case of $\alpha=\omega$, was proven before the
full version of Nash-Williams's theorem, and the problem of computing $o(X^*)$
from $o(X)$ was solved by De Jong, Parikh, and Schmidt \cite{DJP,Schmidt}.  But
there's more history to this problem inbetween these two endpoints.  Before
Nash-Williams's theorem was proved, Erd\H{o}s and Rado proved the special case
of $\alpha<\omega^\omega$ \cite{ER}; before that, Rado proved the case of
$\alpha=\omega^3$. \cite{omega3}  (A note, the result of Erd\H{o}s and Rado is
sometimes misstated as having been for $\alpha=\omega^\omega$, as in
\cite{basic}; but in fact it was only for $\alpha<\omega^\omega$.  A subsequent
paper by Chopra and Pakhomov \cite{trees} will show how to obtain a bound on the
type in the case $\alpha=\omega^\omega$.)

In this paper we will reexamine this proof of Erd\H{o}s and Rado.  This proof
was written before the notion of $o(X)$ was ever defined, but we will see that
we can extract from it an upper bound on $o(\finseq{\omega^k}{X})$.  Actually,
we will improve somewhat on this proof; the bound we will derive will be tighter
than the one that would be obtained from a direct proof-mine.

Specifically:
\begin{thm}
\label{upper-intro}
For any fixed $k$ which is finite and nonzero, the type of
$\finseq{\omega^k}{X}$ is bounded above by a function which is $(k+1)$-times
exponential in $o(X)$ (in an appropriate sense to be made clear below).  (See
Theorem~\ref{upper} for a precise version of this theorem.)
\end{thm}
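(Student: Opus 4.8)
The plan is to induct on $k$. I will freely use the standard facts about the invariant $o$: that it exists and admits the De Jongh--Parikh inductive characterization (the smallest ordinal exceeding $o(Y)$ for every proper lower set $Y$) \cite{DJP}; that it is monotone under quasi-embeddings (order-reflecting maps) and under order-preserving surjections; that $o(A\sqcup B)=o(A)\oplus o(B)$ and $o(A\times B)=o(A)\otimes o(B)$; and that $o(A^{*})=o(\finseq{\omega}{A})$ is given by the De Jongh--Parikh--Schmidt formula \cite{DJP,Schmidt}, which is (roughly) the double exponential $\omega^{\omega^{o(A)}}$ in $o(A)$. Here by ``$n$-times exponential in $\alpha$'' I mean an ordinal of the rough shape $\omega^{\omega^{\cdots^{\alpha}}}$ with $n$ stacked $\omega$'s, up to the lower-order corrections that already appear in the $o(A^{*})$ formula; this is the ``appropriate sense'' referred to in the statement.

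For $k=1$ there is nothing to do: $\finseq{\omega}{X}=X^{*}$, so $o(\finseq{\omega}{X})$ is exactly the quantity of De Jongh, Parikh and Schmidt, which is $2$-times exponential in $o(X)$, matching $k+1=2$. For the inductive step I would bound $o(\finseq{\omega^{k+1}}{X})$ through the De Jongh--Parikh characterization: it is enough to bound $o(Y)$ for each proper lower set $Y$, and every such $Y$ is contained in a set of the form $\{\,t\in\finseq{\omega^{k+1}}{X}: b\not\hookrightarrow t\,\}$ of finite-image strings of length ${<}\,\omega^{k+1}$ avoiding a fixed string $b$. It is worth noting at the outset why this is the substantive content: such a pattern-avoiding set contains strings of length arbitrarily close to $\omega^{k+1}$, so it is \emph{not} contained in any $\finseq{\omega^{k}q}{X}$, and one cannot get the bound by taking a supremum over the obvious lower sets $\finseq{\omega^{k}q}{X}$ (that would undershoot). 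The extra $\omega$-exponentiation in the bound, as $k$ increases by one, comes precisely from controlling these pattern-avoiding sets, just as the second exponentiation in $o(X^{*})$ comes from the fact that proper lower sets of $X^{*}$ need not have bounded length.

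The heart of the argument is therefore a quantitative analysis of $\{\,t: b\not\hookrightarrow t\,\}$ by induction on the structure of $b$, which is where Erd\H{o}s and Rado's method enters. Writing $\omega^{k+1}=\omega^{k}\cdot\omega$, every length-${<}\,\omega^{k+1}$ string decomposes, at the multiples of $\omega^{k}$ below its length, into a finite sequence of blocks of length $\le\omega^{k}$; matching the blocks of $t$ greedily against those of $b$, a string $t$ avoiding $b$ is pinned down, up to a quasi-embedding into a wqo built from $\finseq{\omega^{k}}{X}$, by a finite ``state'' (how much of $b$ has been matched), a current partial block, and a tail that again avoids a strictly shorter pattern at the block level. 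A single block has length $\le\omega^{k}$, so here one needs the sub-lemma that $o(\finseq{\omega^{k}+1}{X})$, the analogue allowing length exactly $\omega^{k}$, exceeds $o(\finseq{\omega^{k}}{X})$ by only a lower-order amount, which holds because a finite-image string of length $\omega^{k}$ must stabilize onto the finite set of symbols recurring cofinally in it. Unwinding this recursion, using the inductive bound on $o(\finseq{\omega^{k}}{X})$ and one application of the $o(A^{*})$-type estimate to absorb the Higman-style quantifier over the finitely many states and the block sequence, should yield a bound that is $(k+2)$-times exponential in $o(X)$; the improvement over a direct proof-mine lies in arranging the matching and the alphabet bookkeeping so that exactly one $\omega$-exponentiation is spent at this step rather than two.

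The main obstacle is this inductive step, and within it two points. First, the block decomposition is not literally a quasi-embedding: under a witnessing embedding $b\hookrightarrow t$, one block of $b$ can be matched across several block-boundaries of $t$, so the naive ``word of blocks'' map fails to reflect the order, and the greedy-matching/state formalization must be set up carefully (or replaced by a quantitative minimal-bad-sequence argument in the Erd\H{o}s--Rado spirit) to yield a genuine ordinal bound. Second, the bookkeeping has to be tight: only one $\omega$-exponentiation may be paid per unit increase of $k$, which forces the finite-image hypothesis to be used essentially — discarding it turns the block sequence into an unconstrained Higman word and costs a second exponentiation, overshooting the target. The sub-lemma on strings of length exactly $\omega^{k}$ is comparatively routine once the stabilization observation is in hand, but it is needed so that $\finseq{\omega^{k}+1}{X}$ does not secretly cost an extra exponential.
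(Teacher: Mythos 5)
There is a genuine gap here, and you have in effect flagged it yourself: the entire quantitative content of the theorem is concentrated in the step you call ``the main obstacle,'' namely bounding $o(\{\,t\in\finseq{\omega^{k+1}}{X}: b\not\le t\,\})$ by something only one exponentiation worse than $o(\finseq{\omega^{k}}{X})$. Your treatment of that step is a plan rather than an argument: you observe that the block decomposition at multiples of $\omega^{k}$ does not reflect the order (a block of $b$ can be matched across several block boundaries of $t$), and you propose to repair this with a ``greedy-matching/state formalization'' or a ``quantitative minimal-bad-sequence argument'' without carrying either out; and the claim that the bookkeeping costs exactly one exponentiation per level --- which is precisely what separates the stated $(k+1)$-fold bound from the $2k$-fold bound a naive proof-mine of Erd\H{o}s--Rado gives --- is asserted, not proved. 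Until that recursion is actually set up and unwound with explicit ordinal arithmetic, the theorem does not follow. A smaller omission: your sub-lemma on strings of length exactly $\omega^{k}$ needs the block alphabet to be finite before the ``stabilize onto the cofinally recurring symbols'' observation applies, so you must first reduce to finite $X$ (or to the finite image of the string at hand), a reduction you never make.

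For comparison, the paper sidesteps the lower-set/pattern-avoidance route entirely. It proves a normal form --- every finite-image $\omega$-sequence is equivalent to a finite prefix followed by $(\text{the cofinally recurring symbols})^{\omega}$ --- and iterates this blockwise to build a monotonic surjection (up to equivalences) from $Q_k(X)^{*}$ onto $\finseq{\omega^k}{X}$, where $Q_k(X)$ is obtained from $X$ by $k-1$ alternations of $\pfinnz$ and disjoint union. The bound then falls out of $o(\pfin{Y})\le 2^{o(Y)}$ together with a single application of the Higman/De Jongh--Parikh--Schmidt formula; the ``one exponentiation per level'' is visible in the construction itself rather than something to be extracted from a matching argument. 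If you want to salvage your approach, the cleanest fix is to replace the pattern-avoidance analysis with such an explicit surjection; the stabilization observation you already invoke for your sub-lemma is exactly the engine of that construction.
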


By contrast, a direct proof-mine of Erd\H{o}s and Rado's paper would result in a
tower of $2k$ exponentials, rather than $k+1$.

Note, by the way, that this bound is much smaller than Schmidt's claimed bound,
so Schmidt's claimed bound (which does not seem to have been intended to be
tight in any way) is at least true in this case.

Unfortunately, it does not seem possible to extract a bound from Nash-Williams's
proof in the same way.  So the problem of finding a bound for the general case,
or even just for $o(\finseq{\omega^\omega}{X})$, remains open.

We will address the more general case of $\alpha<\omega^\omega$ in
Section~\ref{cnf}.

In Section~\ref{seclower}, we will also examine the question of lower bounds;
unfortunately, we are not able to show that our bound is reasonably tight in
general, but we will at least show this when $n=2$ (the case of $n=1$ being
already known due to the work of De Jongh, Parikh, and Schmidt
\cite{DJP,Schmidt}).

More specifically:
\begin{thm}
There is a triply-exponential function $f(\beta)$ such that, for any $\beta$,
there is some well-quasi-ordering $X$ with $o(X)=\beta$ and
$o(\finseq{\omega^2}{X})\ge f(\beta)$.
\end{thm}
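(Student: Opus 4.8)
The plan is to construct, for each target ordinal $\beta$, a well-quasi-order $X$ with $o(X)=\beta$ whose structure forces long linearizations of $\finseq{\omega^2}{X}$. The natural approach is to exploit the two ``layers'' of the ordinal $\omega^2 = \omega\cdot\omega$: a finite-image string of length $<\omega^2$ can be organized into $\omega$-many blocks, each itself essentially a finite string (an element of something like $X^*$), and the outer $\omega$-indexing gives us a second opportunity to iterate the string construction. Since the $n=1$ case is known from De Jongh--Parikh--Schmidt, presumably giving a doubly-exponential lower bound for $o(X^*)$ in terms of $o(X)$, the idea is to apply that mechanism twice: once to pass from $X$ to strings over $X$ (gaining one exponential level) and again across the outer $\omega$-structure, yielding the third exponential. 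So $f$ should be built by composing the known singly-iterated string bound with itself.

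First I would pin down the precise form of the $n=1$ lower bound, i.e. produce an explicit doubly-exponential $g(\beta)$ and a family of witnesses $X$ with $o(X)=\beta$ and $o(X^*)\ge g(\beta)$, matching the De Jongh--Parikh--Schmidt computation of $o(X^*)$. Then, given $\beta$, I would choose $X$ appropriately and lower-bound $o(\finseq{\omega^2}{X})$ by exhibiting a large well-ordered subset: concretely, one embeds a reservation of order-indexed blocks into $\finseq{\omega^2}{X}$ so that distinct ``outer'' configurations are incomparable in the string quasi-order, forcing the linearization to respect a product-like structure. The key quantitative step is to show that the maximum linearization of $\finseq{\omega^2}{X}$ dominates a quantity of the shape $o\bigl((X^*)^*\bigr)$ or the corresponding ordinal arithmetic expression, which—via the reflexivity of the doubly-exponential bound—comes out triply exponential in $\beta = o(X)$.

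I expect the \textbf{main obstacle} to be controlling incomparability in the finite-image transfinite string order well enough to guarantee that the constructed subset is genuinely well-ordered of the claimed large type, rather than collapsing under an unexpected Higman-style embedding between strings of length $<\omega^2$. The subtlety is that the finite-image restriction allows comparisons that ``skip'' over blocks, so a naive block-product construction may admit far more embeddings than a direct product would, deflating the type. The heart of the argument will therefore be an \emph{incomparability lemma}: one must design the blocks (likely using gadgets built from large antichains within $X$, or from carefully separated indecomposable pieces) so that any order-embedding of one string into another is forced to respect the block decomposition, after which the order type is computed by ordinal arithmetic on the block types. Finally I would verify that the resulting composite function $f(\beta)$ is indeed triply exponential in the sense made precise earlier in the paper, matching the upper bound of Theorem~\ref{upper} up to the expected tower height for $k=2$.
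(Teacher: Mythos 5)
Your overall architecture---block gadgets plus an ``incomparability lemma'' forcing any embedding to respect the block decomposition, followed by ordinal arithmetic on the block types---is the right shape, and that lemma is exactly what the paper proves (Proposition~\ref{genlower}: a marker $v$ that is \emph{maximal} in $X$ must map to a $v$, so each $v$-initiated block lands inside a single block of the target; such a marker exists whenever $o(X)$ is a successor, and limit $\beta$ are then handled by embeddings $H_{\beta'}\hookrightarrow H_\beta$ and continuity of the bounding function). But your key quantitative step fails: $o(\finseq{\omega^2}{X})$ does \emph{not} dominate anything of the shape $o((X^*)^*)$, and no gadget can make it do so. The obstruction is the Erd\H{o}s--Rado analysis of $\omega$-blocks (Proposition~\ref{omega-surj}): a finite-image string of length $\omega$ is equivalent to a finite prefix followed by a tail determined solely by the \emph{set} of letters occurring infinitely often---e.g.\ $(vab)^\omega\equiv(vba)^\omega$---so once your markers enforce block-rigidity, each inner $\omega$-block can only remember an element of (roughly) $\pfinz{X}$, worth one exponential $2^{o(X)}$, not an element of $X^*$, worth two. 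Your exponent count is also off: composing the doubly-exponential Higman bound $h$ with itself gives $h(h(\beta))\approx\omega^{\omega^{\omega^{\omega^\beta}}}$, which is four exponentials, not three, and this already exceeds the paper's upper bound $o(\finseq{\omega^2}{X})\le h(q_2(o(X)))\approx\omega^{\omega^{2^{o(X)}}}$ from Theorem~\ref{upper}; so the domination you propose would contradict a theorem of the same paper.

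The repair, which is the paper's actual proof (Theorem~\ref{lowerbd}), is to let the inner layer carry finite \emph{sets} and the outer layer carry Higman strings. For $\beta=\beta'+1$, take $X=H_\beta=H_{\beta'}\amalg 1$, where $H_\beta$ is the family of Abriola et al.\ with $o(H_\beta)=\beta$ \emph{and} $o(\pfin{H_\beta})=2^\beta$ (you need this second property too; generic witnesses for the $n=1$ case need not have it, since $o(\pfin{\cdot})$ is not determined by $o(\cdot)$). Encode $T=\{y_1,\ldots,y_r\}\subseteq H_{\beta'}$ as the indecomposable block $(vy_1\cdots vy_r)^\omega$, and a finite word of such sets by concatenating these blocks separated by the maximal marker $v$. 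This embeds $\pfinz{H_{\beta'}}^*$ into $\finseq{\omega^2}{X}$, giving $o(\finseq{\omega^2}{X})\ge h(-1+2^{\beta'})$: one exponential from $\pfinnz$, two from $h$ on the outer finite-string layer, for the claimed triple exponential matching the upper bound's tower height at $k=2$.
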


Hopefully in the future this can be extended to larger $k$.

\section{Preliminaries}
\label{prelim}

We first recall some preliminaries about the behavior of $o(X)$.
Much of the theory is often stated for well partial orders, with antisymmetry,
rather than well-quasi-orders, without it, but of course this difference is
immaterial; all statements should be taken as being up to equivalence in the
relevant quasi-orders.  That is to say, in a sense we are really talking about
well partial orders, but we use well-quasi-orders here for convenience.

If $Y$ is a well-quasi-order and $X$ is a subset of $Y$, then $o(X)\le o(Y)$, so
more generally if $X$ embeds in $Y$, then $o(X)\le o(Y)$.  Moreover, if $X$ is a
lower set in $Y$ and is also a proper subset of $Y$, then $o(X)<o(Y)$; indeed,
$o(Y)$ is equal to the smallest ordinal greater than every $o(X)$, where $X$
ranges over all proper lower subsets of $Y$.  \cite{DJP} Relatedly, if there is
a monotonic surjection from $Y$ onto $X$, then $o(X)\le o(Y)$.  (This is because
$o(X)$ can also be characterized as the largest ordinal that $X$ has a monotonic
surjection to; if $X$ has a monotonic surjection onto $Y$, then any ordinal that
$Y$ surjects onto, $X$ will surject onto.)  In particular, if $Y$ and $X$ are
quasi-orders on the same set with the order $X$ being an extension of the order
$Y$, then $o(X)\le o(Y)$.  \cite{DJP}

Given two well-quasi-orders $X$ and $Y$, the disjoint union $X\amalg Y$ and
Cartesian product $X\times Y$ are well-quasi-orders with $o(X\amalg
Y)=o(X)\oplus o(Y)$ and $o(X\times Y)=o(X)\otimes o(Y)$, where $\oplus$ and
$\otimes$ represent the natural (or ``Hessenberg'' \cite[pp. 73--81]{hdf})
addition and multiplication on the ordinals, distinct from their ordinary
addition and multiplication.  \cite{DJP} These are also the addition and
multiplication on the ordinals that one gets from the surreals. \cite{ONAG}

It's also worth noting that $o(X)$ is a successor ordinal if and only if $X$
contains a maximal element (after equivalences); in this case one may remove
that maximal element (and its equivalence class) to obtain $X'$ with
$o(X)=o(X')+1$. \cite{DJP}

Our central object of study here is $\finseq{\alpha}{X}$:

\begin{defn}
Given a set $X$ and an ordinal $\alpha$, define $\seq{\alpha}{X}$ to be the set
of (possibly transfinite) sequences of length less than $\alpha$ taking values
in $X$.  (That is, functions $s:\beta\to X$, where $\beta$ is an ordinal less
than $\alpha$.)  Moreover, define $\finseq{\alpha}{X}$ to be the subset of
$\seq{\alpha}{X}$ consisting of those sequences with finite image.
\end{defn}

In the case that $X$ is equipped with a well-quasi-order, one can use this to
define a quasi-order on $\finseq{\alpha}{X}$, which Nash-Williams showed
\cite{NW} is a well-quasi-order:

\begin{defn}
Given a well-quasi-order $X$ and an ordinal $\alpha$, and given
$s,t\in\finseq{\alpha}{X}$, define $s\le t$ if there exists a strictly
increasing function $\varphi:\dom(s)\to\dom(t)$ such that, for all
$\alpha'\in\dom(s)$, one has $s(\alpha')\le t(\varphi(\alpha'))$.
\end{defn}

As noted earlier, $\finseq{\omega}{X}$ is just $X^*$, the set of strings on $X$.
In this case, we have:

\begin{thm}[De Jongh, Parikh, Schmidt \cite{DJP,Schmidt}]
\label{higman-type}
Let $X$ be a well-quasi-order.  Then:
\begin{itemize}
\item If $o(X)=0$, then $o(X^*)=1$.
\item If $o(X)$ is nonzero and finite, then $o(X^*)=\omega^{\omega^{o(X)-1}}$.
\item If $o(X)$ can be written as $\varepsilon+k$, where $\varepsilon$ is an
epsilon number and $k$ is finite, then $o(X^*)=\omega^{\omega^{o(X)+1}}$.
\item Otherwise, $o(X^*)=\omega^{\omega^{o(X)}}$.
\end{itemize}
\end{thm}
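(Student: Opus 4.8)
The plan is to prove the two nontrivial directions ($o(X^*)$ bounded above and below by the stated value) simultaneously by transfinite induction on $o(X)$, organized around the four cases in the statement. The case $o(X)=0$ is immediate: then $X$ is empty up to equivalence, so $X^*$ contains only the empty string and $o(X^*)=1$. For the remaining cases the engine is a single recursion describing how $o(X^*)$ grows when $o(X)$ increases by one, together with a supremum computation at limits; the entire subtlety of the case split will turn out to live at the limit stages.

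The recursion I would establish is, informally, that passing from $X$ to $X$ augmented by one new maximal element turns $o(X^*)$ into $o(X^*)^{\omega}$ whenever $X$ is nonempty. For the lower bound (when the new element $p$ is incomparable to the rest) this comes from an interleaving construction: fixing $k$ copies of $p$ as separators and filling the $k+1$ gaps with arbitrary strings embeds $(X^*)^{k+1}$, and since $o((X^*)^{k+1})=o(X^*)^{\otimes(k+1)}$ by the product formula, taking the union over $k$ produces a lower set of order type $\sup_k o(X^*)^{\otimes(k+1)}=o(X^*)^{\omega}$. For the upper bound one takes the maximal element $m$ guaranteed by a successor value of $o$, sets $X'=X\setminus\{m\}$ with $o(X')=o(X)-1$, stratifies $X^*$ by the number of occurrences of $m$, and bounds each stratum by the product $o((X')^*)^{\otimes(k+1)}$; the delicate point is to show that an arbitrary proper lower set of $X^*$ cannot spread across the strata so as to exceed the supremum, for which one invokes the inductive characterization of $o$ applied to $X^*$ itself. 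Starting from the base value $\omega$ for a one-element $X$ and iterating through the finite ordinals gives $o(X^*)=\omega^{\omega^{o(X)-1}}$, the finite case, the ``$-1$'' being merely the offset of the base value.

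At a limit ordinal $o(X)$ one instead computes $o(X^*)$ as a supremum over proper lower sets $X'$ of $X$, using $o((X')^*)<o(X^*)$ from monotonicity. When the limit is not an epsilon number this supremum is continuous and reproduces the ``otherwise'' value $\omega^{\omega^{o(X)}}$ directly: at $o(X)=\omega$ one gets $\sup_n \omega^{\omega^{n-1}}=\omega^{\omega^{\omega}}$, and just above an epsilon number the values $\omega^{\omega^{\varepsilon+n+1}}$ have supremum $\omega^{\omega^{\varepsilon+\omega}}$, again matching. The one place where continuity fails is at an epsilon number $\varepsilon=o(X)$ itself: the fixed-point identity $\omega^{\varepsilon}=\varepsilon$ makes the naive supremum of the lower-set values stall at $\omega^{\omega^{\varepsilon}}=\varepsilon$, whereas the true value is the strictly larger $\omega^{\omega^{\varepsilon+1}}$. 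This discontinuity is exactly the source of the ``$\varepsilon+k$'' correction, and clearing it is the main obstacle. I would, on the lower-bound side, build a linear extension of $X^*$ that beats the supremum coming from lower sets of $X$ by exploiting the internal string structure of $X^*$ (its own lower sets are far richer than the stars of lower sets of $X$); on the upper-bound side I would show that this single extra factor of $\omega$ in the exponent is all that the string structure can buy, again via the inductive characterization applied to $X^*$. Once the value at each epsilon number is pinned down, the successor recursion of the previous paragraph propagates it to all $\varepsilon+k$ with $k\ge 1$, completing the induction.

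Throughout, the hardest single step is the epsilon-number case, where the matching bounds are genuinely discontinuous in $o(X)$ and cannot be read off from the values at smaller ordinals. A secondary difficulty, present already in the successor case, is controlling \emph{arbitrary} proper lower sets of $X^*$ (not merely the strata, nor the stars of lower sets of $X$) tightly enough to meet the clean interleaving lower bounds, since the Higman subsequence order creates identifications between strata that must not be allowed to inflate the order type; handling these identifications, and producing the intrinsic lower-bound linearizations for ``thin'' $X$ such as chains where no incomparable maximal element is available, is where the remaining technical work concentrates.
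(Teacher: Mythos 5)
The paper does not actually prove this theorem; it quotes it from De Jongh--Parikh and Schmidt, so your proposal must be measured against the classical proofs in those references, and against them it has a genuine gap: you have no working mechanism for the upper bound, at any stage of the induction. Your stratification of $X^*$ by the number of occurrences of the removed maximal element $m$ does correctly identify each stratum with $((X')^*)^{k+1}$ (maximality of $m$ forces separators to map to separators), and this gives the lower bound $o(X^*)\ge\sup_k o((X')^*)^{\otimes(k+1)}$ --- indeed it needs only a maximal element, not an incomparable one, so your closing worry about chains is moot. But the strata cannot yield the upper bound: the disjoint union of the strata, with the cross-stratum relations forgotten, contains an infinite antichain and is not a well-quasi-order, so the principle ``extending the order can only decrease $o$'' has nothing to apply to, and no argument you state assembles the stratum bounds into a bound on $o(X^*)$. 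Deferring this to ``the inductive characterization of $o$ applied to $X^*$ itself'' is circular: that characterization says $o(X^*)=\sup\{o(L)+1\}$ over proper lower sets $L\subsetneq X^*$, and bounding arbitrary such $L$ is exactly the open difficulty. The classical engine, entirely absent from your sketch, is a word-avoidance decomposition: every proper lower set of $X^*$ omits some word $w=x_1\cdots x_n$, and the set of words not containing $w$ decomposes into a finite union of finite concatenation products of sets of the form $Y^*$ with $Y=\{y\in X: y\not\ge x_i\}$ a \emph{proper} lower subset of $X$, which is what permits induction on $o(X)$. Note this is also needed at limit stages: a proper lower set of $X^*$ need not lie in any $(X')^*$ with $X'\subsetneq X$ (the words of length at most $1$ already use every letter), so ``the supremum over lower sets of $X$ is continuous'' does not bound $o(X^*)$ from above.

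The epsilon-number case, which you correctly identify as the locus of the discontinuity, is left entirely as a declaration of intent in both directions. On the lower-bound side there is in fact a short argument you miss: words of a fixed length $k$ form a copy of $X^k$ inside $X^*$ that is order-reflecting (the only strictly increasing map between domains of equal finite size is the identity), so $o(X^*)\ge\sup_k o(X)^{\otimes k}$, and when $o(X)=\varepsilon$ is an epsilon number this supremum is $\sup_k\omega^{\varepsilon k}=\omega^{\omega^{\varepsilon+1}}$, exactly the stated value; no bespoke linearization ``exploiting the internal string structure'' is needed. The matching upper bound at $\varepsilon$ again requires the avoidance decomposition together with careful natural-sum/product arithmetic at the fixed point $\omega^\varepsilon=\varepsilon$, not merely the observation that the naive supremum stalls. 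In sum, your skeleton --- induction on $o(X)$, interleaving lower bounds at successors, suprema at limits, special treatment of epsilon numbers --- matches the shape of the known proofs, but the half of the theorem consisting of upper bounds is unsupported throughout, so the proposal as written does not constitute a proof.
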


It's worth observing here that in all of the constructions above -- disjoint
union, Cartesian product, strings on an alphabet -- the type of the output
depended on only on the type of the input.  However this will not hold in
general; in particular it will not hold for $\finseq{\alpha}{X}$ once
$\alpha>\omega$.  For instance, as observed by Schmidt \cite{Schmidt}, one has
$o(\finseq{\omega+1}{2})=\omega^\omega 2+1$ but $o(\finseq{\omega+1}{1\amalg
1})=\omega^\omega 3+1$.  Thus in general one can only put bounds on the result,
rather than determine the result in terms of $o(X)$ alone.

Finally, given an well-quasi-order $X$, we'll also consider a well-quasi-order
on $\pfin{X}$, the finite power set of $X$.  There are actually multiple such
quasi-orders in common use, but we will only concern ourselves with one of them.

\begin{defn}
Given $S,T\in\pfin{X}$, we define $S\le_{\mathrm{m}} T$ if for each $s\in S$
there is some $t\in T$ such that $s\le t$.
\end{defn}

\begin{notn}
Since we are only using $\le_{\mathrm{m}}$, and are not using any of the other
orders on $\pfin{X}$, we will simply write $S\le T$ rather than
$S\le_{\mathrm{m}} T$.
\end{notn}

This is also a well-quasi-order, as follows from Higman's Lemma.  Once again we
can put a bound on its type:

\begin{thm}[Abriola et.~al.~\cite{Abriola}]
\label{pow2}
If $X$ is a well-quasi-order, then
\[ o(\pfin{X})\le 2^{o(X)}. \]
Moreover, this bound is tight; given an ordinal $\alpha$, there exists a well
partial order $H_\alpha$ with $o(H_\alpha)=\alpha$ and
$o(\pfin{H_\alpha})=2^\alpha$.
\end{thm}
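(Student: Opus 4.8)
The plan is to work throughout with the equivalent description of $\pfin{X}$ as the set of finite antichains of $X$ (equivalently, the finitely generated lower sets) ordered by $\le$, since $S\le T$ and $T\le S$ hold exactly when $S$ and $T$ generate the same lower set. The structural fact driving everything is that a finite antichain of a disjoint union splits canonically into its part in each summand, with no cross-comparabilities, so that
\[ \pfin{X\amalg Y}\cong \pfin{X}\times\pfin{Y}, \qquad o(\pfin{X\amalg Y})=o(\pfin{X})\otimes o(\pfin{Y}). \]
Here $2^{o(X)}$ is to be read as natural (surreal) exponentiation, characterized by $2^{\beta+1}=2^{\beta}\otimes 2$ and $2^{\lambda}=\sup_{\delta<\lambda}2^{\delta}$ at limits; I will verify along the way that this is exactly the exponentiation both halves of the theorem require.

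For the upper bound I would induct on $o(X)$. If $o(X)$ is a successor, then $X$ has a maximal element $m$, and removing its class yields a proper lower set $X'$ with $o(X')=o(X)-1$. The map $\Phi(A)=(A\setminus\{m\},\,[m\in A])$ sends $\pfin{X}$ to $\pfin{X'}\times\mathbf{2}$, with $\mathbf{2}$ the two-element chain, and one checks it is order-reflecting: if $\Phi(A)\le\Phi(B)$ then every element of $A$ other than $m$ is dominated inside $B$, while $[m\in A]\le[m\in B]$ together with maximality of $m$ forces $m\in B$ whenever $m\in A$; hence $A\le B$. An order-reflecting map embeds $\pfin{X}$ into $\pfin{X'}\times\mathbf 2$, so by the inductive hypothesis $o(\pfin{X})\le o(\pfin{X'})\otimes 2\le 2^{o(X')}\otimes 2 = 2^{o(X)}$.

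For the limit step I would use the De Jongh--Parikh characterization $o(\pfin{X})=\sup\{o(\mathcal D)+1:\mathcal D\subsetneq\pfin{X}\text{ a proper lower set}\}$. Such a $\mathcal D$ omits some finite antichain $A_0=\{x_1,\dots,x_k\}$, and downward closure forces every $A\in\mathcal D$ to satisfy $A\not\ge A_0$; thus some $x_i$ dominates no element of $A$, i.e. $A\subseteq X_{\not\ge x_i}:=\{y:y\not\ge x_i\}$. Each $X_{\not\ge x_i}$ is a proper lower set of $X$, so $\delta_i:=o(X_{\not\ge x_i})<o(X)$, and $\mathcal D\subseteq\bigcup_i\pfin{X_{\not\ge x_i}}$ gives $o(\mathcal D)\le\bigoplus_{i=1}^k 2^{\delta_i}$ (a union of $k$ subposets is the order-preserving image of their disjoint union, so its type is at most the natural sum of their types). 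Since $o(X)$ is a limit, $2^{o(X)}=\sup_{\delta<o(X)}2^{\delta}$ is closed under $\oplus$: given $a,b<2^{o(X)}$, pick $\delta<o(X)$ with $a,b<2^{\delta}$, so $a\oplus b<2^{\delta}\otimes 2=2^{\delta+1}\le 2^{o(X)}$. Hence the finite natural sum stays below $2^{o(X)}$, $o(\mathcal D)+1\le 2^{o(X)}$, and the bound follows.

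The harder half is \emph{tightness}, and here the main obstacle is producing a wqo (no infinite antichain) that is nonetheless wide enough to fill the powerset. The product formula propagates tightness across natural sums, since $2^{\beta\oplus\gamma}=2^{\beta}\otimes 2^{\gamma}$: writing $\alpha$ in Cantor normal form $\omega^{\xi_1}\oplus\cdots\oplus\omega^{\xi_m}$ and setting $H_\alpha=H_{\omega^{\xi_1}}\amalg\cdots\amalg H_{\omega^{\xi_m}}$ reduces everything to the additively indecomposable exponents $\omega^{\xi}$. For those I would build $H_{\omega^{\xi}}$ by recursion on $\xi$: a single incomparable point doubles the powerset type without absorption and handles successors, while at limits one must take a union of the earlier $H$'s arranged so that no infinite antichain appears yet every finite antichain size eventually occurs (the finite cases $\xi=k$ are illustrated by the product orders $\mathbb{N}^{k}$, whose arbitrarily large finite staircase-antichains give $o(\pfin{\mathbb N^k})=2^{\omega^k}=\omega^{\omega^{k-1}}$). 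The lower bound $o(\pfin{H_{\omega^\xi}})\ge 2^{\omega^{\xi}}$ is then obtained by exhibiting an explicit $\le$-increasing chain of antichains indexed by $2^{\omega^\xi}$, reading each index through its base-$2$ Cantor normal form as a finite set of coordinates of $H_{\omega^\xi}$; keeping that chain strictly increasing while the ambient order remains a wqo through the limit stages is the crux of the whole construction.
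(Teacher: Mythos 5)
You should note first that the paper does not prove this theorem at all; it is imported wholesale from \cite{Abriola}, so the comparison below is with the argument there. Your upper-bound half is essentially correct and close in spirit to the actual proof. Two points of care: your recursion $2^{\beta+1}=2^{\beta}\otimes 2$ with suprema at limits does reproduce ordinary base-$2$ ordinal exponentiation (so $2^{\omega}=\omega$), which is indeed the exponentiation the statement needs, and your verification that $2^{o(X)}$ is closed under $\oplus$ at limit stages is the right mechanism. In the successor step, your map $\Phi(A)=(A\setminus\{m\},[m\in A])$ is order-reflecting but \emph{not} order-preserving (an element of $A\setminus\{m\}$ may be dominated in $B$ only by $m$ itself), so it is a quasi-embedding rather than an embedding; this still gives $o(\pfin{X})\le o(\pfin{X'})\otimes 2$, because pulling the order back along a quasi-embedding yields a weaker well-quasi-order on the same set, whose type dominates $o(\pfin{X})$ by the extension fact quoted in Section~\ref{prelim} and is at most $o(\pfin{X'}\times\mathbf{2})$. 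With the trivial case $o(X)=0$ added, the induction goes through.

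The tightness half, however, contains a genuine gap, located exactly where you say the crux is. A strictly increasing chain in $\pfin{H}$ under the domination order is the same thing as a strictly increasing chain of finitely generated lower sets of $H$, and any strictly increasing chain $(D_\iota)_{\iota<\beta}$ of lower sets of a wqo $H$ has order type at most $o(H)+1$: choosing $x_\iota\in D_{\iota+1}\setminus D_\iota$ gives points satisfying $x_\iota\le x_{\iota'}\Rightarrow\iota\le\iota'$ (if $\iota'<\iota$ and $x_\iota\le x_{\iota'}\in D_{\iota'+1}\subseteq D_\iota$, then $x_\iota\in D_\iota$, a contradiction), so the indexing is a linearization of a subset of $H$ and $\beta\le o(H)+1$. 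Since $2^{\alpha}$ vastly exceeds $\alpha+1$ in general (already $2^{\omega^{2}}=\omega^{\omega}$), no chain of type $2^{\omega^{\xi}}$ can exist in $\pfin{H_{\omega^{\xi}}}$ for $\xi\ge 2$; your own illustrative example exposes this, since finitely generated lower sets of $\mathbb{N}^{k}$ are \emph{finite}, so every chain in $\pfin{\mathbb{N}^{k}}$ has type at most $\omega$ even though the maximal linearization has type $\omega^{\omega^{k-1}}$. The maximal order type is a statement about linearizations, not chains, and the lower bound must be obtained by building a long linear extension by transfinite induction; this is what Abriola et al.\ do, exploiting the concrete structure of $H_\beta$ recorded in this paper's definition -- ordered sums $H_{\omega^{\omega^{\gamma}}}=\sum_{\delta<\omega^{\gamma}}H_{\omega^{\delta}}$ and lexicographic products at non-principal exponents -- precisely the limit-stage construction your sketch leaves unspecified. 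Your CNF reduction via $\pfin{X\amalg Y}\cong\pfin{X}\times\pfin{Y}$ and $2^{\beta\oplus\gamma}=2^{\beta}\otimes 2^{\gamma}$ is fine, but the proposed chain witness for $o(\pfin{H_{\omega^{\xi}}})\ge 2^{\omega^{\xi}}$ cannot be repaired as stated.
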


For convenience, we will also make the following definition:
\begin{defn}
If $X$ is a set (particularly, a well-quasi-order), we define
\[ \pfinz{X} := \pfin{X} \setminus \{\emptyset\}. \]
\end{defn}

Observe that $o(\pfinz{X})=-1+o(\pfin{X})$; so, they are equal if $X$ is
infinite, and $o(\pfinz{X})$ is one smaller if $X$ is finite.

\section{Proof of the upper bound}
\label{secproof}

In this section we prove the upper bound.  As mentioned above, the proof is
essentially similar to Erd\H{o}s and Rado's earlier proof \cite{ER} that did not
provide an upper bound.  The proof proceeds by induction on $k$, starting from
the already-known case of $k=1$.

However, we make two alterations to Erd\H{o}s and Rado's approach.  The first
is is in how we approach sequences of length $\omega^k$; Erd\H{o}s and Rado's
proof decomposed these by considering them as a sequence of length
$\omega^{k-1}$ over an alphabet consisting of sequences of length $\omega$.  We
reverse this, and consider them as a sequence of length $\omega$ over an
alphabet consisting of sequences of length $\omega^{k-1}$.  I do not believe
this change makes any substantial difference; I simply found the proof easier to
carry out in this form.

The second change, however, is what allows us to derive a better bound than the
one implicit in their proof.  Where Erd\H{o}s and Rado simply iterated the
operation $X\mapsto X^*$, resulting in a tower of $2n$ exponentials, we will use
this operation only once, and rely on $\pfinn$ (or rather $\pfinnz$) for the
rest, resulting in a tower of only $n+1$ exponentials.

We start by examining, what do sequences of length $\omega$ look like?  This
question was answered by Erd\H{o}s and Rado, but we will repeat the proofs here.
Let us make a definition:

\begin{defn}
If $X$ is a well-quasi-order, let $\finseqeq{\alpha}{X}$ be the subset of
$\finseq{\alpha+1}{X}$ consisting of sequences of length exactly $\alpha$.
(Similarly, $\seqeq{\alpha}{X}$ will be the subset of $\seq{\alpha+1}{X}$
consisting of sequences of length exactly $\alpha$.)
\end{defn}

It's also useful here to know about indecomposable sequences:

\begin{defn}
A sequence over a well-quasi-order $X$ is said to be \emph{indecomposable}
\cite[Section 7.6.1]{Fra} if it is equivalent to all of its nonempty tails.
Note that the length of an indecomposable sequence must be an indecomposable
ordinal, that is, a power of $\omega$.  We will denote the set of
indecomposable finite-image sequences of length exactly $\alpha$ over an
alphabet $X$ by $\indec{\alpha}{X}$.
\end{defn}

Note that if a sequence is indecomposable, then so is any other sequence
equivalent to it, since if $s\equiv t$ and $s$ indecomposable, then given a tail
$t'$ of $t$, using $s\le t$ one may find a tail $s'$ of $s$ with $s'\le t'$, so
$t\le s\le s'\le t'$.

\begin{prop}[Erd\H{o}s, Rado \cite{ER}]
\label{repeat}
Let $X$ be a well-quasi-order.  Define the map $\varphi_X:\pfinz{X}\to
\indec{\omega}{X}$ as follows: To take $\varphi_X(S)$, write the elements of
$S$ in an arbitrary order, so $S=\{s_0,\ldots,s_{p-1}\}$.  Then define
$\varphi_X(S)=(s_0s_1\cdots s_{p-1})^\omega$. Then $\varphi_X$ is
well-defined up to equivalences and monotonic.
\end{prop}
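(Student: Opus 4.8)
The plan is to verify the two claims in order: first that $\varphi_X(S)=(s_0s_1\cdots s_{p-1})^\omega$ does not depend (up to equivalence) on the chosen enumeration of $S$, and then that $S\le T$ in $\pfinnz{X}$ implies $\varphi_X(S)\le\varphi_X(T)$ in $\indec{\omega}{X}$. Along the way one must also check that the output really does land in $\indec{\omega}{X}$, i.e., that $(s_0\cdots s_{p-1})^\omega$ has finite image (clear, its image is $S$), has length $\omega$ (clear, since $p\ge 1$ as $S\ne\emptyset$), and is indecomposable. Indecomposability is easy: any nonempty tail of $(s_0\cdots s_{p-1})^\omega$ is itself of the form $wu^\omega$ where $u=s_0\cdots s_{p-1}$ and $w$ is a tail of $u^m$ for some $m$; this embeds into $u^\omega$ by shifting, and $u^\omega$ embeds into it by skipping the initial segment $w$, so the two are equivalent.

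For well-definedness, let $S=\{s_0,\ldots,s_{p-1}\}$ under one enumeration and let $t=(s_{\sigma(0)}\cdots s_{\sigma(p-1)})^\omega$ for a permutation $\sigma$. I would show each of $\varphi_X(S)$ (under the original enumeration) and $t$ embeds into the other; by symmetry it suffices to do one direction. To embed $u^\omega$, where $u=s_0\cdots s_{p-1}$, into $v^\omega$, where $v=s_{\sigma(0)}\cdots s_{\sigma(p-1)}$: since every symbol $s_i$ occurring in $u$ also occurs in $v$, I can embed a single block $u$ into a single block $v$ only if the symbols appear in compatible order — which they need not — so instead I embed the block $u$ into $v\,v$, or more robustly, embed $u$ into $v^p$ by sending $s_i$ to an occurrence of $s_i$ somewhere in the $i$-th copy of $v$ (choosing the images strictly increasing across the copies). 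Concatenating $\omega$ many such embeddings of $u$ into $v^p$ gives a strictly increasing, order-preserving map $u^\omega\to v^\omega$. (Here I use $s_i\le s_i$ reflexively; the embedding need only match symbols on the nose, which is fine.)

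For monotonicity, suppose $S\le T$, so for each $s_i\in S$ there is some $t_{j(i)}\in T$ with $s_i\le t_{j(i)}$. Write $u=s_0\cdots s_{p-1}$ and $v=t_0\cdots t_{q-1}$. Exactly as in the previous paragraph, I embed the block $u$ into $v^p$ by sending $s_i$ into the $i$-th copy of $v$, landing on the coordinate holding $t_{j(i)}$, which satisfies $s_i\le t_{j(i)}$; strict increase across copies is automatic. Repeating this $\omega$ times yields a strictly increasing $\varphi:\dom(u^\omega)\to\dom(v^\omega)$ with $u^\omega(\alpha')\le v^\omega(\varphi(\alpha'))$ for all $\alpha'$, i.e., $\varphi_X(S)\le\varphi_X(T)$. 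Since $\varphi_X$ is defined only up to equivalence, and we have already shown the equivalence class is independent of enumeration, this suffices.

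The only real subtlety — and the step I would treat most carefully — is the ordering mismatch: the symbols of $S$ (or of $T$) need not occur in $u$ (resp. $v$) in an order compatible with a single-block embedding, which is precisely why one passes to $v^p$ (one fresh copy of the block per symbol of $u$) rather than trying to fit $u$ inside a bounded number of copies of $v$. Once that device is in place, everything else is routine bookkeeping with strictly increasing functions between well-orders of type $\omega$.
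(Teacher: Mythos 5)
Your proposal is correct and uses essentially the same argument as the paper: the key device of embedding the block $s_0\cdots s_{p-1}$ into $(t_0\cdots t_{q-1})^p$, one fresh copy of the target block per symbol, and then iterating $\omega$ times, is exactly the paper's proof. The only cosmetic difference is that the paper handles well-definedness as the special case $S=T$ of the monotonicity argument rather than treating the permutation case separately.
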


\begin{proof}
First, observe that the outputs are all indecomposable, since any nonempty tail
of $(s_0s_1\cdots s_{p-1})^\omega$ is of the form $s_r \cdots
s_{p-1}(s_0s_1\cdots s_{p-1})^\omega$ for some $r$ and so contains
$(s_0s_1\cdots s_{p-1})^\omega$ in addition to being contained in it.

Now, we show monotonicity and well-definedness.
Suppose $S=\{s_0,\ldots,s_{p-1}\}$ and $T=\{t_0,\ldots,t_{q-1}\}$, and suppose
that for each $0\le i<p$, there is some $j(i)$ such that $s_i \le
t_{j(i)}$.  We wish to show that \[(s_0s_1\cdots s_{p-1})^\omega \le
(t_0t_1\cdots t_{q-1})^\omega.\]

Write
\[(t_0t_1\cdots t_{q-1})^\omega = ((t_0t_1\cdots t_{q-1})^p)^\omega.\]

Certainly
\[s_0 s_1\cdots s_{p-1} \le (t_0t_1\cdots t_{q-1})^p,\]
since each $s_i$ is less than or equal to the $i$'th copy of $t_{j(i)}$.
Extending this implies 
\[(s_0s_1\cdots s_{p-1})^\omega \le
((t_0t_1\cdots t_{q-1})^p)^\omega = (t_0t_1\cdots t_{q-1})^\omega,\]
as required.

Applying this with $S=T$ then shows that $\varphi_X$ is well-defined up to
equivalences, and applying it for arbitrary $S$ and $T$ shows that $\varphi_X$
is monotonic.
\end{proof}

\begin{prop}[Erd\H{o}s, Rado \cite{ER}]
\label{omega-surj}
Let $X$ be a well-quasi-order. Define the map $\varphi_X:X^*\times\pfinz{X}\to
\finseqeq{\omega}{X}$ by $\varphi_X(t,S)=t\varphi_X(S)$.  Then this $\varphi_X$
is well-defined up to equivalences; is monotonic; and is surjective up to
equivalences.  Moreover, given $s\in\finseqeq{\omega}{X}$, we may take $S$ to
consist of all elements of $X$ that occur infinitely often in $s$.
\end{prop}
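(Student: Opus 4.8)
The plan is to dispose of well-definedness and monotonicity quickly, with Proposition~\ref{repeat} doing the work, and to give a real argument only for surjectivity, which is the one substantive point.

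First, $s\varphi_X(S)$ has length $|s|+\omega=\omega$ and uses only the finitely many symbols of $s$ together with the finitely many elements of $S$, so it does lie in $\finseqeq{\omega}{X}$; and since $\varphi_X(S)$ is well-defined up to equivalence (Proposition~\ref{repeat}) and prepending a fixed finite string preserves $\le$, hence $\equiv$, the value $\varphi_X(s,S)$ depends (up to equivalence) only on $(s,S)$. For monotonicity, suppose $s\le s'$ in $X^*$ via a strictly increasing $f\colon\dom(s)\to\dom(s')$ and $S\le T$ in $\pfinz{X}$; by Proposition~\ref{repeat} there is a strictly increasing $g\colon\omega\to\omega$ witnessing $\varphi_X(S)\le\varphi_X(T)$. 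I would glue these: use $f$ on the initial copy of $\dom(s)$, and send the position $|s|+i$ of $s\varphi_X(S)$ (the $i$th position of the $\varphi_X(S)$ block) to $|s'|+g(i)$. This is strictly increasing because $f$ lands in the initial segment $[0,|s'|)$ while the second piece lands in the final segment $[|s'|,\omega)$, and it dominates termwise by construction, so $\varphi_X(s,S)\le\varphi_X(s',T)$. Running this in both directions also gives well-definedness up to equivalence once more.

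The substantive part is surjectivity. Given $t\in\finseqeq{\omega}{X}$, let $S$ be the set of symbols occurring infinitely often in $t$. Since $t$ has length $\omega$ and finite image, $S$ is a nonempty finite set, and since the symbols not in $S$ together occupy only finitely many positions, there is an $N$ with $t(n)\in S$ for all $n\ge N$. Put $s:=t|_{[0,N)}\in X^*$, fix an enumeration $S=\{s_0,\dots,s_{p-1}\}$, so $\varphi_X(s,S)=s\,(s_0\cdots s_{p-1})^\omega$; I claim this is equivalent to $t$. In each direction I would take the identity on the positions $[0,N)$ and then build a greedy map on the tail. For $s\,(s_0\cdots s_{p-1})^\omega\le t$: each $s_i$ still occurs infinitely often in $t|_{[N,\omega)}$, so one can pick $N\le n_0<n_1<\cdots$ with $t(n_j)=s_{j\bmod p}$, embedding the periodic tail into $t$. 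For $t\le s\,(s_0\cdots s_{p-1})^\omega$: every block $s_0\cdots s_{p-1}$ of the periodic part contains all of $s_0,\dots,s_{p-1}$, so processing $n=N,N+1,\dots$ in turn, for each $n$ there is a position of $s\,(s_0\cdots s_{p-1})^\omega$ past those already used whose symbol is exactly $t(n)\in S$, yielding the required strictly increasing, termwise-dominating map.

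I do not expect a real obstacle. Once one sees that $S$ should be the set of infinitely-occurring symbols and $s$ a long enough prefix, both halves of the equivalence are routine greedy constructions. The only points that need a moment's care are that the pigeonhole genuinely makes $S$ nonempty (which uses that $t$ is infinite) and that the glued and greedy maps are strictly increasing across the seam between the finite part and the $\omega$-part, which is automatic because the finite part stays inside a bounded initial segment while the rest stays in the complementary final segment.
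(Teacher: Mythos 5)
Your proposal is correct and follows essentially the same route as the paper: well-definedness and monotonicity are reduced to Proposition~\ref{repeat} plus gluing across the seam, and surjectivity is proved by taking $S$ to be the symbols occurring infinitely often, cutting off a finite prefix past which only those symbols appear, and establishing both inequalities between the tail and the periodic word by greedy choice of positions. No gaps.
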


You will notice here the reuse of the symbol $\varphi_X$; this is deliberate.
We will soon be defining many maps of the form $\varphi_X:
F(X)\to\finseq{\alpha}{X}$, each for a different $F(X)$.  We could denote these
$\varphi_{X,F(X)}$, but since there will be no ambiguity, we simply write
$\varphi_X$ for all of them, which will make things easier.

\begin{proof}
Since $\varphi_X:\pfinz{X}\to\finseq{\omega}{X}$ is well-defined up to
equivalences and monotonic, it follows immediately that
$\varphi_X:X^*\times\pfinz{X}\to\finseqeq{\omega}{X}$ is as well.  The problem,
then, is to show that it is surjective.

Suppose $s=s_0 s_1\cdots\in \finseqeq{\omega}{X}$.  Let $S=\{s_0,s_1,\ldots\}$;
then $S$ is a finite set.  Let $T\subseteq S$ be the set of elements of $S$ that
occur infinitely often in $S$.  Because $S\setminus T$ is finite, and each
element of it occurs only finitely often, there must be some index $r$ such
that, for all $i\ge r$, $s_i\in T$.  Let $s' = s_r s_{r+1}\cdots$.

We want to show, then, that $\varphi_X(s_0\cdots s_{r-1}, T)$ is equivalent to
$s$.  Since the initial $r$ elements are equal, it suffices to show that $s'$ is
quivalent to $\varphi_X(T)$.

Say $T=\{t_0,\ldots,t_{p-1}\}$.  Then for each $i\ge r$ we have $s_i \in T$ and
therefore $s_i \le t_0\cdots t_{p-1}$; this then implies \[s' \le (t_0\cdots
t_{p-1})^\omega = \varphi_X(T).\]

For the reverse, to show $\varphi_X(T)\le s'$, we can inductively define a
strictly increasing $j:\omega\to\omega$ such that
\[
(t_0\cdots t_{p-1})^\omega_i \le s'_{j(i)}.
\]

Assume that we have defined $j(i')$ for all $i'<i$, and we want to define
$j(i)$.  Now, $\varphi_X(T)_i$ is some element of $T$, which means it occurs
infinitely often in $s'$.  Therefore, it occurs at least once with index greater
than $j(i')$ for any $i'<i$.  So, define $j(i)$ to be such an index.

Then 
\[
(t_0\cdots t_{p-1})^\omega_i \le s'_{j(i)}
\]
as required, showing that $\varphi_X(T)\le s'$.  So the two are equivalent, and
$\varphi_X$ is surjective up to equivalences.  Moreover, $T$ was selected in the
manner required.
\end{proof}

\begin{cor}
\label{indec}
Let $X$ be a well-quasi-order.  Then $\varphi_X:\pfinz{X}\to
\indec{\omega}{X}$ is surjective up to equivalences.  Moreover, a
length-$\omega$ sequence is indecomposable if and only if each of its elements
occur infinitely often, and if and only if it is equivalent it $\varphi_X(S)$,
where $S$ is its image.
\end{cor}

\begin{proof}
By Proposition~\ref{omega-surj}, if $s\in\indec{\omega}{X}$, then
$s=s'\varphi_X(S)$ for some $s'\in X^*$ and $S\in\pfinz{X}$.  But this means
that $\varphi_X(S)$ is a nonempty tail of $s$, and $s$ was assumed
indecomposable, proving the surjectivity claim.

Moreover, if each element in a sequence of length $\omega$ occurs infinitely
often, then by Proposition~\ref{omega-surj} it is equivalent to $\varphi_X(S)$,
where $S$ is its image, and thus by Proposition~\ref{repeat} is indecomposable;
and if it is indecomposable, then each element occurs infinitely often by the
previous paragraph, so all of these conditions are equivalent.
\end{proof}

So now we know what sequences of length $\omega$ look like; in order to get a
handle on sequences of length $\omega^k$, however, we're going to have to make
the domain of our mapping a bit bigger.

\begin{defn}
\label{defn-pkqk}
Let $X$ be a well-quasi-order.  Define $P_k(X)$ and $Q_k(X)$ inductively as
follows:
\begin{enumerate}
\item $P_0(X)=X$
\item $Q_k(X) = \coprod_{i=0}^{k-1} P_i(X)$
\item For $k>0$, $P_k(X) = \pfinz{Q_k(X)}$
\end{enumerate}
\end{defn}

Note that all of these are well-quasi-orders, and their types may be bounded (in
terms of $o(X)$) by application of Theorem~\ref{pow2}.  Note also that $Q_0(X)$
is the empty set.

Now, just as we mapped $\pfinz{X}$ and $X^*\times\pfinz{X}$ to
$\finseqeq{\omega}{X}$, we'll do the same with these to cover
$\finseq{\omega^k}{X}$ and $\finseqeq{\omega^k}{X}$.

\begin{defn}
\label{defn-phi}
Let $X$ be a well-quasi-order.  Define a map
$\varphi_X:P_k(X)\to\finseq{\omega^k+1}{X}$, and (for $k>0$)
a map $\varphi_X:Q_k(X)\to\finseq{\omega^{k-1}+1}{X}$, as follows.  (Again,
we'll continue using the same symbol for all of these.)
\begin{enumerate}
\item
$\varphi_X:P_0(X)\to\finseq{2}{X}$ will simply be the inclusion map $X\to
X\cup\{\varepsilon\}$.
\item $\varphi_X:Q_k(X)\to\finseq{\omega^{k-1}+1}{X}$ will simply be the
disjoint union of $\varphi_X$ on each of the $P_k(X)$ that make it up.
\item To define
$\varphi_X:P_k(X)\to\finseq{\omega^k+1}{X}$ for $k>0$, suppose $S\in
\pfinz{Q_k(X)} = \{s_0, s_1, \ldots, s_{p-1}\}$; then
\[ \varphi_X(S) = (\varphi_X(s_0)\cdots\varphi_X(s_{p-1}))^\omega. \]
\end{enumerate}
\end{defn}

Note how case (3) in Definitions~\ref{defn-pkqk} and
\ref{defn-phi} is connected to Propositions~\ref{repeat} and \ref{omega-surj};
we take finite sets to express sequences of the form $s^\omega$.  Thus, for
instance, an element $x\in X$ is quite different from $\{x\}\in\pfinz{X}$, as
$\phi_X(x)=x$, while $\phi_X(\{x\})=x^\omega$.

\begin{prop}
\label{prop-pkqk}
The map $\varphi_X:P_k(X)\to \finseq{\omega^k+1}{X}$ and, for $k>0$, the map
$\varphi_X:Q_k(X)\to \finseq{\omega^{k-1}+1}{X}$ are well-defined up to
equivalence and are monotonic.  Also, the image of the each map consists of
indecomposable sequences.
\end{prop}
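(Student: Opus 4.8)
The plan is to prove all three assertions simultaneously by induction on $k$, mirroring the inductive structure of the definition of $P_k$, $Q_k$, and $\varphi_X$. The base case $k=0$ is immediate: $\varphi_X:P_0(X)\to\finseq{2}{X}$ is the inclusion $X\to X\cup\{\varepsilon\}$, which is trivially well-defined and monotonic, and every length-$1$ (or length-$0$) sequence is vacuously indecomposable, so the image consists of indecomposable sequences. For the inductive step we assume the statement for all indices less than $k$ and establish it for $k$, handling $Q_k$ first and then $P_k$.

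For $Q_k(X)=\coprod_{i=0}^{k-1}P_i(X)$, the map $\varphi_X$ is by definition the disjoint union of the maps $\varphi_X:P_i(X)\to\finseq{\omega^i+1}{X}\subseteq\finseq{\omega^k+1}{X}$ for $i<k$, each of which is well-defined and monotonic by the inductive hypothesis. Since a disjoint union of well-defined monotonic maps is well-defined and monotonic (elements from different summands are incomparable in $Q_k$, so monotonicity need only be checked within a single summand), this part is essentially bookkeeping. The substantive work is the clause for $P_k(X)=\pfinz{Q_k(X)}$ with $k>0$: given $S=\{s_0,\dots,s_{p-1}\}\in\pfinz{Q_k(X)}$ we set $\varphi_X(S)=(\varphi_X(s_0)\cdots\varphi_X(s_{p-1}))^\omega$, and we must show this has length $\le\omega^k$, is indecomposable, is well-defined up to equivalence independent of the chosen listing, and is monotonic in $S$. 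The length bound follows because each $\varphi_X(s_j)$ has length $<\omega^k$ (it lies in $\finseq{\omega^k+1}{X}$ and in fact, being in the image of some $P_i$ with $i<k$, has length $\le\omega^i<\omega^k$ when $i<k$; one should check $\omega^{i_0}+\cdots<\omega^k$, which holds since a finite sum of ordinals each $<\omega^k$ is $<\omega^k$ as $\omega^k$ is additively indecomposable), so the $\omega$-th power of the concatenation has length $\le\omega^k<\omega^k+1$. Indecomposability follows exactly as in Proposition~\ref{repeat}: any nonempty tail of $(\varphi_X(s_0)\cdots\varphi_X(s_{p-1}))^\omega$ both contains and is contained in the whole sequence, because it is a suffix of one block followed by the full periodic tail.

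For well-definedness and monotonicity, I would abstract the argument of Proposition~\ref{repeat}. Suppose $S=\{s_0,\dots,s_{p-1}\}$ and $T=\{t_0,\dots,t_{q-1}\}$ in $P_k(X)$ with $S\le T$ in the $\le_{\mathrm m}$ order, so for each $i$ there is $j(i)$ with $s_i\le t_{j(i)}$ in $Q_k(X)$; by the inductive hypothesis applied to $Q_k$, $\varphi_X$ is monotonic there, so $\varphi_X(s_i)\le\varphi_X(t_{j(i)})$ as sequences. Then $\varphi_X(s_0)\cdots\varphi_X(s_{p-1})\le(\varphi_X(t_0)\cdots\varphi_X(t_{q-1}))^p$, matching the $i$-th factor on the left into the $i$-th copy of the block on the right (the embedding witnessing $\varphi_X(s_i)\le\varphi_X(t_{j(i)})$ slots into that copy's $j(i)$-th factor), and taking $\omega$-th powers gives $\varphi_X(S)\le\varphi_X(T)$ since $((\cdots)^p)^\omega=(\cdots)^\omega$. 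Taking $S=T$ (so $j$ a permutation) yields well-definedness up to equivalence. The main obstacle, such as it is, is purely organizational: getting the simultaneous induction set up cleanly so that the monotonicity of $\varphi_X$ on $Q_k$ — which is what makes the concatenation-block matching legitimate at level $k$ — is genuinely available from the inductive hypothesis, and making sure the length bookkeeping ($\omega^{i_1}+\cdots+\omega^{i_m}<\omega^k$ for $i_\ell<k$, so that the targets land in $\finseq{\omega^k+1}{X}$) is recorded. No single step is hard; the value of the proposition is in packaging the repeated use of the Proposition~\ref{repeat} argument.
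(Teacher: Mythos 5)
Your proof is correct and follows essentially the same route as the paper: induction on $k$, with $Q_k$ reduced to the $P_i$ for $i<k$ and the $P_k$ case handled by the block-repetition argument of Proposition~\ref{repeat}. The only cosmetic difference is that you inline that argument (re-deriving $\varphi_X(s_0)\cdots\varphi_X(s_{p-1})\le(\varphi_X(t_0)\cdots\varphi_X(t_{q-1}))^p$ and taking $\omega$-th powers), whereas the paper simply cites Proposition~\ref{repeat} over the alphabet $Q_k(X)$ and composes with the letterwise application of $\varphi_X$.
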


\begin{proof}
We induct on $k$, alternating between $P$ and $Q$ in the order $P_0(X)$,
$Q_1(X)$, $P_1(X)$, $Q_2(X)$, etc.  Obviously for each $k$, if it is true for
$P_\ell(X)$ for all $\ell<k$ then it is true for $Q_k(X)$.  And clearly it is
true for $P_0(X)$.  That leaves the case of $P_k(X)$ with $k>0$.

So for $P_k(X)$ with $k>0$, assume the statement is true for $Q_k(X)$. By
Proposition~\ref{repeat}, the map
\[\varphi_{Q_k(X)}:\pfinz{Q_k(X)}\to\indec{\omega}{Q_k(X)}\] is
monotonic and well-defined up to equivalences.
That is, given $S,T\in P_k(X)$,
we have $\varphi_{Q_k(X)}(S) \le
\varphi_{Q_k(X)}(T)$, where here we are comparing length-$\omega$ sequences
over $Q_k(X)$, using the order on $Q_{k-1}(X)$.

Now, if we take $\varphi_{Q_k(X)}(S)$ and apply to each of its elements the
map $\varphi_X$, and concatenate all of these in place, we obtain
$\varphi_X(S)$.  But since $\varphi_X:Q_k(X)\to \finseq{\omega^{k-1}+1}{X}$
is monotonic,
and concatenation (including infinitary concatenation) is monotonic, that means
that overall we have described a monotonic operation, and $\varphi_X:P_k(X)\to
\finseq{\omega^k+1}{X}$ is monotonic.

Finally, we  most show indecomposability.  It suffices to handle $P_k(X)$, as
the image of $Q_k(X)$ is a union of images of $P_i(X)$.  If $k=0$, then the
image of $P_0(X)=X$ consists of sequences of length $1$, which are necessarily
indecomposable.  While if $k>0$, then each sequence in the image of $P_k(X)$ is
of the form $s^\omega$ for some sequence $s$, and is therefore also
indecomposable.
\end{proof}

\begin{defn}
Let $X$ be a well-quasi-order.  We define
$\varphi_X:Q_k(X)^*\to\finseq{\omega^k}{X}$ by
$\varphi_X(a_1\cdots a_n)=\varphi_X(a_1)\cdots\varphi_X(a_n)$.
\end{defn}

\begin{defn}
Let $X$ be a well-quasi-order.  We define $\varphi_X:Q_k(X)^* \times
P_k(X)\to\finseq{\omega^k+1}{X}$ by $\varphi_X(a,b)=\varphi_X(a)\varphi_X(b)$.
\end{defn}

\begin{thm}
\label{mainthm}
The maps $\varphi_X:Q_k(X)^*\to\finseq{\omega^k}{X}$, $\varphi_X:Q_k(X)^*
\times P_k(X)\to\finseq{\omega^k+1}{X}$ are well-defined up to equivalence and
are monotonic.  In addition, the former map is surjective up to equivalences,
and the image \[\varphi_X(Q_k(X)^*\times P_k(X))\] contains
$\finseqeq{\omega^k}{X}$ up to equivalences.  Finally, the image
$\varphi_X(P_k(X))$ contains $\indec{\omega^k}{X}$ up to equivalences.
\end{thm}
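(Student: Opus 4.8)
The plan is to prove all four assertions together by induction on $k$, the base case $k=1$ being exactly Proposition~\ref{repeat}, Proposition~\ref{omega-surj}, and the corollary following it (here $Q_1(X)=X$ and $P_1(X)=\pfinz{X}$, and the first map is the identity on $X^*$). For well-definedness up to equivalence and monotonicity of $\varphi_X\colon Q_k(X)^*\to\finseq{\omega^k}{X}$ and $\varphi_X\colon Q_k(X)^*\times P_k(X)\to\finseq{\omega^k+1}{X}$, I would observe that both are built by substitution-and-concatenation from the maps $\varphi_X$ on $Q_k$ and $P_k$ furnished by Proposition~\ref{prop-pkqk}, and that concatenation of finite-image sequences is monotone in each coordinate; so these two properties transfer routinely, exactly as in the finite-word case. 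The lengths work out too, since a letter of $Q_k(X)$ (lying in some $P_i$ with $i<k$) maps to a sequence of length at most $\omega^{k-1}$, a finite word to one of length $<\omega^k$, and an element of $P_k(X)$ to an indecomposable sequence of length at most $\omega^k$.

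For surjectivity of $\varphi_X\colon Q_k(X)^*\to\finseq{\omega^k}{X}$, I would take $s$ of length $<\omega^k$ and cut it, using its Cantor normal form, into finitely many blocks: some blocks of length exactly $\omega^{k-1}$ followed by a final block of length $<\omega^{k-1}$. Each full block lies in $\finseqeq{\omega^{k-1}}{X}$ and so, by the induction hypothesis for the product map at level $k-1$, is equivalent to $\varphi_X(a)\varphi_X(b)$ with $a\in Q_{k-1}(X)^*$ and $b\in P_{k-1}(X)$; the final block lies in $\finseq{\omega^{k-1}}{X}$ and is equivalent to $\varphi_X(c)$ with $c\in Q_{k-1}(X)^*$. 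Using the inclusions $Q_{k-1}(X)\hookrightarrow Q_k(X)$ and $P_{k-1}(X)\hookrightarrow Q_k(X)$, these representing objects become letters and words in $Q_k(X)^*$, and concatenating the finitely many pieces exhibits $s$ as $\varphi_X$ of a single word in $Q_k(X)^*$.

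The main obstacle is the claim that $\varphi_X(Q_k(X)^*\times P_k(X))$ contains $\finseqeq{\omega^k}{X}$, the level-$k$ analogue of Proposition~\ref{omega-surj}. Here I would take $t$ of length exactly $\omega^k$ and cut it into $\omega$ many blocks $t^{(0)},t^{(1)},\ldots$ of length $\omega^{k-1}$. The crucial use of the finite-image hypothesis is this: if $F$ is the finite image of $t$, then each block lies in $\finseqeq{\omega^{k-1}}{F}$, and since finite power sets of finite sets are finite, an easy induction shows $Q_k(F)$ and $P_k(F)$ are finite. By the induction hypothesis each $t^{(n)}$ is equivalent to $\varphi_X(w^{(n)})$ for some word $w^{(n)}\in Q_k(F)^*$; concatenating the $w^{(n)}$ gives a single length-$\omega$ word $W$ over the \emph{finite} alphabet $Q_k(F)$, with $t$ equivalent to the image of $W$ under the evident monotone evaluation extending $\varphi_X$ to length-$\omega$ words. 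Because $Q_k(F)$ is finite, $W$ has finite image, so Proposition~\ref{omega-surj} applies one level up, at the alphabet $Q_k(X)$, and yields $W\equiv\varphi_{Q_k(X)}(u,S)$ with $u\in Q_k(X)^*$ and $S\in\pfinz{Q_k(X)}=P_k(X)$. Applying the monotone evaluation to this equivalence transports it down to $t\equiv\varphi_X(u)\varphi_X(S)=\varphi_X(u,S)$, as desired. The two points needing care are the finiteness of $Q_k(F)$ (which is precisely what lets us invoke the finite-image Proposition~\ref{omega-surj} at the higher alphabet) and the monotonicity and well-definedness of the transfinite evaluation on length-$\omega$ words, which I would check directly from monotonicity of $\varphi_X$ on $Q_k$ together with monotonicity of (possibly infinite) concatenation.

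Finally, the statement that $\varphi_X(P_k(X))$ contains $\indec{\omega^k}{X}$ will follow from the previous paragraph exactly as the corollary followed from Proposition~\ref{omega-surj}: given an indecomposable $t$ of length $\omega^k$, write $t\equiv\varphi_X(u)\varphi_X(S)$ with $u\in Q_k(X)^*$ and $S\in P_k(X)$; then $\varphi_X(S)$ is a nonempty tail of a sequence equivalent to $t$, and since indecomposability is preserved under equivalence, $t$ is equivalent to that tail $\varphi_X(S)$, which lies in $\varphi_X(P_k(X))$.
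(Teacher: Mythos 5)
Your proposal is correct and follows essentially the same route as the paper's proof: induction on $k$, a Cantor-normal-form cut for the surjectivity of $\varphi_X$ on $Q_k(X)^*$, a decomposition of a length-$\omega^k$ sequence into $\omega$ blocks of length $\omega^{k-1}$ fed into Proposition~\ref{omega-surj} over a derived alphabet made finite by the finite-image hypothesis, and the same tail argument for $\indec{\omega^k}{X}$. The only differences are cosmetic: you localize the finiteness argument to the image $F$ of the given sequence where the paper reduces once and for all to finite $X$, and you flatten the block representations \emph{before} applying Proposition~\ref{omega-surj} (at the alphabet $Q_k(X)$, so that $S\in\pfinz{Q_k(X)}=P_k(X)$ holds literally) where the paper applies it at $Y=Q_{k-1}(X)^*\times P_{k-1}(X)$ and flattens afterwards.
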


Note that in the case $k=1$, this is covered by Propositions~\ref{repeat} and
\ref{omega-surj}.

\begin{proof}
That these maps are monotonic and well-defined up to equivalences follows from
Proposition~\ref{prop-pkqk}.  The problem, then, is to prove the statements
about their images.

In fact, it suffices to prove these statements under the assumption that $X$ is
finite.  To see this, observe that since any $s$ in either
$\finseq{\omega^k}{X}$ or $\finseqeq{\omega^k}{X}$ uses only finitely many
elements from $X$, then if we let $X'\subseteq X$ be this finite image of $s$,
to prove the statement for $s$ it suffices to prove the statement with $X'$ in
place of $X$.  So, if we prove it for all finite $X'$ and $s$, we prove it for
all $s$, and therefore for all $X$ and $s$, i.e., we have proved it for all $X$
regardless of finiteness.  As such, for the rest of this proof, we assume $X$ is
finite.

We will induct on $k$; assume the statements are true for all $\ell<k$.
Then for $Q_k(X)^*$, say we are given $s\in \finseq{\omega^k}{X}$.  If
$s=\varepsilon$, then $s=\varphi_X(\varepsilon)$, so assume $s\ne\varepsilon$.
Let $\alpha=\dom(s)<\omega^k$, and write $\alpha$ in Cantor normal form as
$\alpha=\omega^{\ell_0}+\ldots+\omega^{\ell_r}$, where $\ell_0\ge\ldots\ge
\ell_r$.  Then we may correspondingly split up $s$ as $s_0\cdots s_r$ with
$\dom(s_i)=\omega^{\ell_i}$.  Since each $\ell_i<k$, we may apply the inductive
hypothesis to conclude that each $s_i$ is (up to equivalences) in the image of
the map
\[ \varphi_X: Q_{\ell_i}(X)^*\times P_{\ell_i}(X)\to\finseq{\omega^{\ell_i}+1}{X}.\]
If we write $s_i = \varphi_X(t_i, t'_i)$, then we may define
\[ t = t_0 t'_0 \cdots t_r t'_r; \]
since, for $\ell<k$, we have $Q_\ell(X)\subseteq Q_k(X)$ and $P_\ell(X)\subseteq
Q_k(X)$, we have $t\in Q_k(X)^*$.  Moreover, $\varphi_X(t)=s$, proving the
statement for $Q_k(X)^*$.

For $Q_k(X)^* \times P_k(X)$, say now we are given $s\in\finseqeq{\omega^k}{X}$.
If $k=0$, then $s$ consists of just a single element $x\in X$, and so we can
write $s=\varphi_X(\varepsilon, x)$.

If $k>0$, then we may decompose $s$ as $s=s_0s_1s_2\cdots$, where
$\dom(s_i)=\omega^{k-1}$.  We now apply the inductive hypothesis to write $s_i =
\varphi_X(w_i, t_i)$ (up to equivalences) where $w_i\in Q_{k-1}(X)^*$ and
$t_i\in P_{k-1}(X)$.
For convenience, define
\[Y=\pfinz{Q_{k-1}(X)}\times P_{k-1}(X)=(P_{k-1}(X))^2.\]

So consider the sequence 
\[ v := (\im(w_i), t_i)_{i<\omega} \]
of length $\omega$ over the alphabet $Y$.  We wish to apply
Proposition~\ref{omega-surj} to $v$, but first we need to know that $t$ uses
only finitely many elements of $Y$.  This is where we apply our assumption that
$X$ is finite; since $X$ is finite, it is easily seen that $Y$ is also finite.
Therefore, there is no question that $v$ uses only finitely many elements of
$Y$.

With that settled, we may now apply Proposition~\ref{omega-surj} to $v$ to
obtain $u\in Y^*$ and $S\in \pfinz{Y}$ such that $\varphi_Y(u,S) = v$ (up to
equivalences).

This means that, for some $p$,
\[u=(\im(w_0),t_0), (\im(w_1),t_1), \ldots, (\im(w_{p-1}),t_{p-1}).\]
Recall here that $w_i\in Q_{k-1}(X)^*$ and $t_i\in P_{k-1}(X)$, and
\[ Q_{k-1}(X) \cup P_{k-1}(X) \subseteq Q_k(X), \]
so if we write
\[u'=w_0t_0\cdots w_{p-1} t_{p-1},\]
then $u'\in Q_k(X)^*$.  Note that $\varphi_X(u')=s_0\cdots s_{p-1}$.

Now we consider $S$, which we will write as
$S=\{(T_0,S_0),\ldots,(T_{q-1},S_{q-1})\}$.
Let $T$ be the union of the $T_i$, and let 
\[U=T\cup\{S_0,\ldots,S_{q-1}\}\in\pfinz{Q_k(X)}.\]
For convenience once again, define
$Z=Q_k(X)$.

Let $s'=s_p s_{p+1}\cdots$, and
let \[t=w_p \varphi_Z(t_p) w_{p+1}
\varphi_Z(t_{p+1}) \cdots\in Z^*;\]
we wish to show that, up to equivalences,
$\varphi_Z(U) = t$.
From above, we know that
\[
t_p t_{p+1} \cdots = \varphi_Z(\{S_0,\ldots,S_{q-1}\})
= (S_0 S_1 \cdots S_{q-1})^\omega
\]
and that the images of the $w_i$ also (up to equivalence) form a repeating sequence $T_0, T_1, \ldots, T_{q-1},
T_0, T_1$, \ldots; recall here that each such image is an element of the set
$\pfinz{Q_{k-1}(X)}\subseteq\pfinz{Z}$.

Putting this together, we see that $t$ (a length-$\omega$ sequence over $Z$) has
image equal to $U$, and that each element in it occurs infinitely often; each
$S_i$ occurs infinitely often as one of the $t_i$, each element of $T$ occurs
infinitely often among the $w_i$ (since it occurs in some $\im(w_i)$, and there
are infinitely many $w_i$ with that same image), and these are all the
elements that occur.  Therefore, by Corollary~\ref{indec}, $t$ is (as a sequence
over $Z$) indecomposable and equivalent to $\varphi_Z(U)$.

So if we now pass from $Z$ down to $X$, we conclude by the monotonicity of
concatenation that $s'$, which recall is
given by
\[
s' = s_p s_{p+1} \cdots = \varphi_X(w_p) \varphi_X(t_p) \varphi_X(w_{p+1})
\varphi_X(t_{p+1}) \cdots,
\]
is equivalent to $\varphi_X(U)$;
and since we have shown that $\varphi_X(u')=s_0\cdots s_{p-1}$,
we conclude that $\varphi_X(u',U)=s$.  Since $u'\in Q_k(X)^*$ and $U\in P_k(X)$,
that proves the required surjectivity in this case.

Finally, we handle the case of indecomposable sequences.  Say we are given
$s\in\indec{\omega^k}{X}$.  By the preceding argument, one may write (up to
equivalences) $s=\phi_X(u')\phi_X(S)$ for some $(u',S)\in Q_k(X)^* \times
P_k(X)$.  But by assumption, $s$ is indecomposable, and $\phi_X(S)$ is a
nonempty tail of it; so $s$ is equivalent to $\phi_X(S)$, concluding the proof
of the theorem.
\end{proof}

\begin{rem}
Based on Proposition~\ref{omega-surj}, which shows that every length-$\omega$
finite-image sequence has an indecomposable tail, one might wonder whether
the same is true for more general finite-image sequences; in fact, this is true
(see \cite[Section 7.6.4]{Fra}), but we will not show it here.
\end{rem}

We can then write out explicitly the bound this gets us.

\begin{defns}
Define sequences of functions $p_k$ and $q_k$ recursively via:
\begin{enumerate}
\item $p_0(\beta)=\beta$
\item $q_k(\beta) = \bigoplus_{i=0}^{k-1} p_i(\beta)$
\item For $k>0$, $p_k(\beta) = -1 + 2^{q_k(\beta)}$.
\end{enumerate}
\end{defns}

\begin{thm}
\label{upper}
Let $h$ be the function such that $o(X^*)=h(o(X))$ (see
Theorem~\ref{higman-type}).  Then, for any well-quasi-order $X$ and any whole
number $k$, one has
\[
o(\finseq{\omega^k}{X}) \le h(q_k(o(X))),
\]
\[
o(\finseqeq{\omega^k}{X}) \le h(q_k(o(X)))\otimes p_k(o(X)),
\]
and
\[
o(\indec{\omega^k}{X}) \le p_k(o(X)).
\]
\end{thm}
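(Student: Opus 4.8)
The plan is to use Theorem~\ref{mainthm} to reduce the three bounds to upper bounds on $o(Q_k(X)^*)$, on $o(P_k(X))$, and on $o(Q_k(X)^*\times P_k(X))$, and to reduce those in turn to bounds on $o(P_k(X))$ and $o(Q_k(X))$ alone. So the first step --- and the only part with any inductive content --- is an auxiliary induction on $k$ establishing
\[
o(P_k(X)) \le p_k(o(X)) \quad\text{and}\quad o(Q_k(X)) \le q_k(o(X)).
\]
The base case is $P_0(X) = X$, where $o(P_0(X)) = o(X) = p_0(o(X))$. Given these bounds for all $P_i(X)$ with $i < k$, the bound for $Q_k(X) = \coprod_{i<k} P_i(X)$ follows from $o(Q_k(X)) = \bigoplus_{i<k} o(P_i(X)) \le \bigoplus_{i<k} p_i(o(X)) = q_k(o(X))$, using monotonicity of $\oplus$; and then, for $k > 0$, the bound for $P_k(X) = \pfinz{Q_k(X)}$ follows from $o(\pfin{Y}) \le 2^{o(Y)}$ together with $o(\pfinz{Y}) = -1 + o(\pfin{Y})$, giving $o(P_k(X)) \le -1 + 2^{o(Q_k(X))} \le -1 + 2^{q_k(o(X))} = p_k(o(X))$, where we use that $\beta \mapsto 2^\beta$ and $\beta \mapsto -1 + \beta$ are monotone.

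With these in place, the three displayed bounds are read off from Theorem~\ref{mainthm}, using the facts recorded in Section~\ref{prelim} that a monotone map which is surjective up to equivalences cannot increase $o$ and that $o$ is monotone under embedding up to equivalences. For the first bound, $\varphi_X : Q_k(X)^* \to \finseq{\omega^k}{X}$ is monotone and surjective up to equivalences, so $o(\finseq{\omega^k}{X}) \le o(Q_k(X)^*) = h(o(Q_k(X))) \le h(q_k(o(X)))$, the last inequality using monotonicity of $h$, which is clear from the case split in Theorem~\ref{higman-type}. For the third bound, $\varphi_X : P_k(X) \to \finseq{\omega^k+1}{X}$ is monotone and its image contains $\indec{\omega^k}{X}$ up to equivalences, so $o(\indec{\omega^k}{X}) \le o(P_k(X)) \le p_k(o(X))$. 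For the second bound, $\varphi_X : Q_k(X)^* \times P_k(X) \to \finseq{\omega^k+1}{X}$ is monotone and its image contains $\finseqeq{\omega^k}{X}$ up to equivalences, so $o(\finseqeq{\omega^k}{X}) \le o(Q_k(X)^* \times P_k(X)) = o(Q_k(X)^*) \otimes o(P_k(X)) = h(o(Q_k(X))) \otimes o(P_k(X)) \le h(q_k(o(X))) \otimes p_k(o(X))$, using monotonicity of $\otimes$ and of $h$.

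I do not expect a genuine obstacle here: all of the substance already lives in Theorem~\ref{mainthm} and in the preliminary facts about $o$. The two points that must be handled carefully are, first, the monotonicity of $h$, of $\beta \mapsto 2^\beta$, of $\beta \mapsto -1 + \beta$, and of $\oplus$ and $\otimes$, which is routine; and second, the ``up to equivalences'' bookkeeping --- namely, that when Theorem~\ref{mainthm} says the image of some $\varphi_X$ contains a set $Z$ up to equivalences, one passes to the literal image $I$, observing that $o(Z) \le o(I)$ via the embedding of $Z/{\equiv}$ into $I/{\equiv}$ and that $o(I)$ is at most $o$ of the domain of $\varphi_X$ via the monotone surjection onto $I$, which is exactly what the displayed chains of inequalities need.
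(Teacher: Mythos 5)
Your proposal is correct and is essentially the paper's own proof: the paper disposes of this theorem in one sentence as an immediate consequence of Theorem~\ref{mainthm}, Theorem~\ref{higman-type}, and the preliminaries of Section~\ref{prelim}, and your write-up simply makes explicit the intended bookkeeping (the induction giving $o(P_k(X))\le p_k(o(X))$ and $o(Q_k(X))\le q_k(o(X))$, followed by reading off the three bounds from the surjectivity and image statements). No gaps.
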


This is the more explicit version of Theorem~\ref{upper-intro}.

\begin{proof}
This follows immediately from Theorem~\ref{mainthm} together with
Theorem~\ref{higman-type} as well as the other preliminaries mentioned in
Section~\ref{prelim}.
\end{proof}

\begin{exmp}
Let's examine what this gets us for $\finseq{\omega^k}{x}$ for small $k$.  For
the trivial case of $k=0$, we get $o(\finseq{1}{X})\le 1$, as we should,
and for the well-known case of $k=1$, we get
\[ o(\finseq{\omega}{X})\le h(o(X)) \le \omega^{\omega^{o(X)+1}}, \]
again as we should.

Meanwhile, for our first new case, $k=2$, we get
\[
o(\finseq{\omega^2}{X}) \le
h((-1 + 2^{o(X)}) \oplus o(X)) \le
\omega^{\omega^{(-1 + 2^{o(X)}) \oplus o(X) + 1}}.
\]

Let's now consider what happens if $o(X)=2$ (that is, $|X|=2$).  In this case,
the bound becomes
\[ 
o(\finseq{\omega^2}{X}) \le h(5) = \omega^{\omega^4}.
\]
Now, if $|X|=2$, then either $X$ is a chain of size $2$, or an antichain of size
$2$.  In the case of a chain, say $X=\{0,1\}$, this is certainly an
overestimate; the actual type would be at most $h(4)=\omega^{\omega^3}$, as
$(01)^\omega$ and $1^\omega$ are equivalent, so one has only $4$ distinct
building blocks, not $5$.  In the case of an antichain, by contrast, it is
possible that this bound is tight, although I have not verified whether it is.
\end{exmp}

\subsection{The more general case of $\alpha<\omega^\omega$}
\label{cnf}

Let us briefly address the more general case of $\finseq{\alpha}{X}$ and
$\finseqeq{\alpha}{X}$ for $\alpha<\omega$.  Of course, if one only wants crude
bounds, one can simply find some $k$ such that $\alpha\le\omega^k$ and then
apply Theorem~\ref{upper}, but it's not hard to do better, so let's address
that.

For convenience, let's make some definitions first.

\begin{defns}
As above, we'll define $h$ to be the function such that $o(X^*)=h(o(X))$ (see
Theorem~\ref{higman-type}).  We'll also define
\[ f_k(\beta) = h(q_k(\beta)) \]
and
\[ g_k(\beta) = h(q_k(\beta))\otimes p_k(\beta), \]
so that (per Theorem~\ref{upper}) we have $o(\finseq{\omega^k}{X})\le f_k(o(X))$ and $o(\finseqeq{\omega^k}{X})\le
g_k(o(X))$.
\end{defns}

With this, the case of $\finseqeq{\alpha}{X}$ is quite simple:

\begin{thm}
\label{finseqeq-gen}
Suppose $\alpha=\omega^{k_0} + \ldots + \omega^{k_r}$, where $\omega > k_0 \ge
\ldots \ge k_r$.
Then
\[ o(\finseqeq{\alpha}{X}) \le
\bigotimes_{i=0}^r g_{k_i}(o(X)).
\]
\end{thm}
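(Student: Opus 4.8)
The plan is to realize $\finseqeq{\alpha}{X}$ as a monotonic surjective image of the finite Cartesian product $\prod_{i=0}^r \finseqeq{\omega^{k_i}}{X}$, and then to read off the bound from Theorem~\ref{upper} together with the fact, recalled in Section~\ref{prelim}, that the natural product computes the type of a Cartesian product.

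First I would introduce the concatenation map
\[
c : \prod_{i=0}^r \finseqeq{\omega^{k_i}}{X} \to \finseqeq{\alpha}{X}, \qquad c(s_0,\ldots,s_r) = s_0 s_1 \cdots s_r .
\]
Since $\dom(s_i)=\omega^{k_i}$ and $\alpha=\omega^{k_0}+\ldots+\omega^{k_r}$, the concatenation has domain exactly $\alpha$, and its image is a finite union of the (finite) images of the $s_i$, so $c$ does land in $\finseqeq{\alpha}{X}$. (Each $\finseqeq{\omega^{k_i}}{X}$ is a subset of the well-quasi-order $\finseq{\omega^{k_i}+1}{X}$, hence is itself a well-quasi-order, so the product and its type make sense.)

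Next I would verify surjectivity and monotonicity. For surjectivity, given $s\in\finseqeq{\alpha}{X}$, split $\dom(s)=\alpha$ at the Cantor-normal-form boundaries $\beta_i := \omega^{k_0}+\ldots+\omega^{k_{i-1}}$; the restriction of $s$ to $[\beta_i,\beta_{i+1})$, reindexed via $\xi\mapsto\beta_i+\xi$, is a sequence $s_i$ of length exactly $\omega^{k_i}$, and $c(s_0,\ldots,s_r)=s$. For monotonicity, suppose $s_i\le t_i$ for every $i$, witnessed by strictly increasing $\psi_i:\dom(s_i)\to\dom(t_i)$ with $s_i(\xi)\le t_i(\psi_i(\xi))$; writing $\dom(c(s_0,\ldots,s_r))$ and $\dom(c(t_0,\ldots,t_r))$ as the evident disjoint unions of blocks isomorphic to $\omega^{k_0},\ldots,\omega^{k_r}$, the map sending the $\xi$-th position of the $i$-th block of $c(s_0,\ldots,s_r)$ to the $\psi_i(\xi)$-th position of the $i$-th block of $c(t_0,\ldots,t_r)$ is strictly increasing — here one uses $\psi_i(\xi)<\omega^{k_i}$ so that each block maps into the corresponding block order-preservingly — and it dominates $c(s_0,\ldots,s_r)$ pointwise. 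Hence $c$ is monotonic.

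Finally I would conclude: since $c$ is a monotonic surjection,
\[
o(\finseqeq{\alpha}{X}) \le o\!\left(\prod_{i=0}^r \finseqeq{\omega^{k_i}}{X}\right) = \bigotimes_{i=0}^r o(\finseqeq{\omega^{k_i}}{X}) \le \bigotimes_{i=0}^r g_{k_i}(o(X)),
\]
where the middle equality is the iterated form of $o(X\times Y)=o(X)\otimes o(Y)$, and the last inequality combines Theorem~\ref{upper} with the (standard) monotonicity of the natural product in each argument. None of the steps is a real obstacle; the only point needing a little care is the monotonicity of $c$, i.e.\ checking that the block-wise embedding witnesses glue to a single strictly increasing map, which is exactly where the inequality $\psi_i(\xi)<\omega^{k_i}$ is used.
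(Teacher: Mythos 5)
Your proposal is correct and is exactly the paper's argument: the paper's entire proof is the observation that concatenation gives a monotonic surjection from $\prod_{i=0}^r \finseqeq{\omega^{k_i}}{X}$ onto $\finseqeq{\alpha}{X}$, from which the bound follows via $o(X\times Y)=o(X)\otimes o(Y)$ and Theorem~\ref{upper}. You have simply written out the routine verifications (surjectivity via splitting at the Cantor-normal-form boundaries, monotonicity via gluing the blockwise witnesses) that the paper leaves implicit.
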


\begin{proof}
Concatenation forms a monotonic surjection from $\prod_{i=0}^r
\finseqeq{\omega^k}{X}$ to $\finseqeq{\alpha}{X}$.
\end{proof}

We can handle the case of $\finseq{\alpha}{X}$ with only slightly more
complication.

\begin{thm}
\label{finseq-gen}
Suppose $\alpha=\omega^{k_0} + \ldots + \omega^{k_r}$, where $\omega > k_0 \ge
\ldots \ge k_r$.
Then
\[ o(\finseq{\alpha}{X}) \le
\bigoplus_{i=0}^r \left( \left(\bigotimes_{j=0}^{i-1}
g_{k_j}(o(X))\right)\otimes  f_{k_i}(o(X))\right) .
\]
\end{thm}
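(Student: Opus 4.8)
The plan is to mimic the structure of the proof of Theorem~\ref{finseqeq-gen}, but accounting for the fact that a sequence of length \emph{less than} $\alpha$ need not use up all of the Cantor normal form of $\alpha$; it may "stop early" in some block. Concretely, write $\alpha = \omega^{k_0} + \cdots + \omega^{k_r}$ with $\omega > k_0 \ge \cdots \ge k_r$. Given $s \in \finseq{\alpha}{X}$, its length $\beta = \dom(s)$ satisfies $\beta < \alpha$, so writing $\beta$ in Cantor normal form and comparing with that of $\alpha$, there is a well-defined smallest index $i$ such that $\beta < \omega^{k_0} + \cdots + \omega^{k_i}$; equivalently, $\beta = \omega^{k_0} + \cdots + \omega^{k_{i-1}} + \gamma$ for some $\gamma < \omega^{k_i}$. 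Thus $s$ splits as a concatenation $s = s_0 \cdots s_{i-1} s'$, where $\dom(s_j) = \omega^{k_j}$ for $j < i$ (so $s_j \in \finseqeq{\omega^{k_j}}{X}$) and $\dom(s') = \gamma < \omega^{k_i}$ (so $s' \in \finseq{\omega^{k_i}}{X}$).

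This decomposition gives, for each $i \in \{0,\ldots,r\}$, a monotonic map from $\left(\prod_{j=0}^{i-1} \finseqeq{\omega^{k_j}}{X}\right) \times \finseq{\omega^{k_i}}{X}$ into $\finseq{\alpha}{X}$ by concatenation. I would then assemble these into a single monotonic surjection onto $\finseq{\alpha}{X}$ from the disjoint union
\[
\coprod_{i=0}^r \left( \finseq{\omega^{k_i}}{X} \times \prod_{j=0}^{i-1} \finseqeq{\omega^{k_j}}{X} \right) ,
\]
where the $i$-th component handles exactly those sequences whose length "terminates" within the $i$-th block of the CNF of $\alpha$. Surjectivity up to equivalences follows since every $s \in \finseq{\alpha}{X}$ lands in exactly one such component via the splitting above; monotonicity of each component follows because concatenation is monotonic and because, if $s \le t$ in $\finseq{\alpha}{X}$, then — by the position argument — $t$'s length reaches at least as far into the CNF as $s$'s does, so... actually this last point is the subtle bit, and I flag it below. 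Applying $o(-) \ge o(\text{image})$ for surjections, together with $o(X \amalg Y) = o(X) \oplus o(Y)$ and $o(X \times Y) = o(X) \otimes o(Y)$ from Section~\ref{prelim}, and then the bounds $o(\finseq{\omega^{k_i}}{X}) \le f_{k_i}(o(X))$ and $o(\finseqeq{\omega^{k_j}}{X}) \le g_{k_j}(o(X))$, yields exactly
\[
o(\finseq{\alpha}{X}) \le \bigoplus_{i=0}^r \left( f_{k_i}(o(X)) \otimes \bigotimes_{j=0}^{i-1} g_{k_j}(o(X)) \right).
\]

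The main obstacle is verifying that the assembled map is genuinely monotonic \emph{across} the different disjoint-union components — i.e., that there is never a situation where $s \le t$ in $\finseq{\alpha}{X}$ but $s$ lies in component $i$ and $t$ lies in a component $i' < i$ (which would break the monotonicity of the coproduct map, since the components are incomparable in the disjoint union). Since $s \le t$ requires a strictly increasing embedding $\dom(s) \to \dom(t)$, we get $\dom(s) \le \dom(t)$, hence $t$'s length reaches at least as far into the CNF of $\alpha$ as $s$'s; so $i' \ge i$, and the only remaining worry is the boundary case $i' = i$, where one must check the embedding restricts appropriately to the initial blocks and the final partial block. I expect this to work cleanly but it is the step that needs care — one wants the embedding $\varphi$ to respect the block structure, which it does because the $j$-th block occupies the ordinal interval $[\omega^{k_0}+\cdots+\omega^{k_{j-1}}, \omega^{k_0}+\cdots+\omega^{k_j})$ in both domains and a strictly increasing map cannot send a point in a later block below a point forced into an earlier block; one may need to pass to an equivalent witness, exactly as in the surjectivity arguments of Theorem~\ref{mainthm}. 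Everything else is a routine bookkeeping of $\oplus$ and $\otimes$.
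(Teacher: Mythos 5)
Your decomposition and the resulting monotonic surjection from $\coprod_{i=0}^r \bigl( \finseq{\omega^{k_i}}{X} \times \prod_{j=0}^{i-1} \finseqeq{\omega^{k_j}}{X} \bigr)$ are exactly the paper's proof, and the bookkeeping with $\oplus$ and $\otimes$ is as you describe. The one thing to correct is the ``main obstacle'' you flag: it is vacuous. To conclude $o(\finseq{\alpha}{X}) \le o(\text{source})$ you only need the surjection to be monotonic, i.e., comparable elements of the \emph{source} must map to comparable elements of the target; elements lying in different components of the disjoint union are incomparable in the source, so nothing whatsoever needs to be checked about them, even if their images happen to satisfy $s \le t$. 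A monotonic surjection is not required to reflect the order, so your analysis of where the embedding $\dom(s)\to\dom(t)$ sends the blocks is unnecessary.
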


\begin{proof}
For any sequence of length less than $\alpha$, we can find the smallest $i$ so
that its length is less than $\omega^{k_0}+\ldots+\omega^{k_i}$; then the
sequence breaks down into sequences of length equal to $\omega^{k_j}$ (for
$j<i$) and a sequence of length less than $\omega^{k_i}$.  This gives us a
monotonic surjection onto
$\finseq{\alpha}{X}$ from
\[
\coprod_{i=0}^r \left( \left(\prod_{j=0}^{i-1}
\finseqeq{\omega^{k_j}}{X}\right)\times  \finseq{\omega^{k_i}}{X}\right) .
\]
\end{proof}

However, when there's a nonzero finite part it's actually possible to do a
little better than this.

\begin{defns}
Define $P_k^0(X)=\pfin{Q_k(X)}$, i.e., $P_k^0(X)=P_k(X)\cup\{\emptyset\}$.  Then
extend $\varphi_X$ from $P_k(X)$ to $P_k^0(X)$ via
$\varphi_X(\emptyset)=\varepsilon$.  We similarly extend $\varphi_X$ from
$Q_k(X)^*\times P_k(X)$ to $Q_k(X)^*\times P_k^0(X)$ in the obvious manner.
\end{defns}

\begin{prop}
\label{mainthmplus}
The map $\varphi_X:P_k^0(X)\to\finseq{\omega^k+1}{X}$ is well-defined up to
equivalences.  The map $\varphi_X:Q_k(X)^*\times P_k^0(X) \to
\finseq{\omega^k+1}{X}$ is well-defined up to equivalences, monotonic, and
surjective up to equivalences.
\end{prop}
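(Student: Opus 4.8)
The plan is to bootstrap everything off Theorem~\ref{mainthm}, since $P_k^0(X)$ differs from $P_k(X)$ only by the adjoined point $\emptyset$, whose image is stipulated to be $\varepsilon$. First I would dispose of well-definedness: $\varphi_X$ is already well-defined up to equivalences on $P_k(X)$ by Proposition~\ref{prop-pkqk}, and in $\pfin{Q_k(X)}$ the empty set forms its own equivalence class (since $S\le\emptyset$ forces $S=\emptyset$), so assigning it the value $\varepsilon$ imposes no new constraint; hence $\varphi_X:P_k^0(X)\to\finseq{\omega^k+1}{X}$ is well-defined, and well-definedness of $\varphi_X:Q_k(X)^*\times P_k^0(X)\to\finseq{\omega^k+1}{X}$ then follows formally from well-definedness on each factor, exactly as in Theorem~\ref{mainthm}.

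Next I would check monotonicity of $\varphi_X:Q_k(X)^*\times P_k^0(X)\to\finseq{\omega^k+1}{X}$ by cases on the second coordinate. Given $(a,b)\le(a',b')$, i.e.\ $a\le a'$ and $b\le b'$: if $b\neq\emptyset$ then $b'\neq\emptyset$ as well, both lie in $P_k(X)$, and the inequality $\varphi_X(a,b)\le\varphi_X(a',b')$ is exactly an instance of the monotonicity already established in Theorem~\ref{mainthm}; if $b=\emptyset$ then $\varphi_X(a,b)=\varphi_X(a)\varepsilon=\varphi_X(a)$, and since $\varphi_X(a)\le\varphi_X(a')$ (monotonicity of $\varphi_X$ on $Q_k(X)^*$) and $\varphi_X(a')$ is an initial segment of $\varphi_X(a',b')=\varphi_X(a')\varphi_X(b')$, transitivity finishes the case.

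Finally, for surjectivity up to equivalences I would split according to the length of the given $s\in\finseq{\omega^k+1}{X}$, i.e.\ according to whether $\dom(s)=\omega^k$ or $\dom(s)<\omega^k$. In the first case $s\in\finseqeq{\omega^k}{X}$, which Theorem~\ref{mainthm} already places (up to equivalences) in $\varphi_X(Q_k(X)^*\times P_k(X))\subseteq\varphi_X(Q_k(X)^*\times P_k^0(X))$; in the second case $s\in\finseq{\omega^k}{X}$, and since $\varphi_X:Q_k(X)^*\to\finseq{\omega^k}{X}$ is surjective up to equivalences by Theorem~\ref{mainthm}, we have $s\equiv\varphi_X(a)=\varphi_X(a,\emptyset)$ for some $a\in Q_k(X)^*$. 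In either case $s$ lies in the image up to equivalences. I do not expect any genuine obstacle here: all the substance is in Theorem~\ref{mainthm}, and the role of the adjoined $\emptyset$ is simply to let the $P_k^0$-coordinate ``do nothing,'' so that the single map $\varphi_X:Q_k(X)^*\times P_k^0(X)\to\finseq{\omega^k+1}{X}$ captures the length-$<\omega^k$ and length-exactly-$\omega^k$ cases at once; the one point requiring a separate (and completely routine) verification is the monotonicity clause when the second coordinate is $\emptyset$, carried out above.
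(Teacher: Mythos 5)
Your proposal is correct and follows essentially the same route as the paper: well-definedness and monotonicity are inherited from Proposition~\ref{prop-pkqk} (with the adjoined $\emptyset$ causing no trouble since $\varepsilon$ is below everything), and surjectivity is obtained by splitting $\finseq{\omega^k+1}{X}$ into $\finseq{\omega^k}{X}$ and $\finseqeq{\omega^k}{X}$ and invoking the two surjectivity clauses of Theorem~\ref{mainthm}. The extra case analysis you give for monotonicity is more detailed than the paper's one-line appeal but is the same argument in substance.
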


\begin{proof}
Since we already know that $\varphi_X:P_k(X)\to\finseq{\omega^k+1}{X}$ is
well-defined up to equivalences and monotonic by Proposition~\ref{prop-pkqk},
the same obviously also holds on $P_k^0(X)$, which then implies it also holds
(by Proposition~\ref{prop-pkqk} again) on $Q_k(X)^*\times P_k^0(X)$.  So the
only question is surjectivity.

Any element of $\finseq{\omega^k+1}{X}$ is either an element of
$\finseq{\omega^k}{X}$ or $\finseqeq{\omega^k}{X}$.  In the former case, by
Theorem~\ref{mainthm}, it (up to equivalences) lies in the image of $Q_k(X)^*$,
i.e., in the image of $Q_k(X)^*\times\{\emptyset\}$.  In the latter case, again
by Theorem~\ref{mainthm}, it (up to equivalences) lies in the image of
$Q_k(X)^*\times P_k(X)$.  Either way, up to equivalences, it lies in the image
of $Q_k(X)^*\times P_k^0(X)$.
\end{proof}

Based on this, we define:

\begin{defn}
Let $p^+_k(\beta)=1+p_k(\beta)$ and $g^+_k(\beta)=f_k(\beta)\otimes
p^+_k(\beta)$.
\end{defn}

And from this we conclude the following proposition and theorem:

\begin{prop}
\label{plusone}
If $X$ is a well-quasi-order, one has
$o(\finseq{\omega^k+1}{X})\le g^+_k(o(X))$.
\end{prop}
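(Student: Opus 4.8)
The plan is to read the bound straight off Proposition~\ref{mainthmplus}. That proposition supplies a monotonic surjection (up to equivalences)
\[ \varphi_X : Q_k(X)^* \times P_k^0(X) \to \finseq{\omega^k+1}{X}, \]
and since a monotonic surjection cannot increase $o$, while $o$ of a Cartesian product is the natural product of the two $o$'s (both recalled in Section~\ref{prelim}), this already gives
\[ o(\finseq{\omega^k+1}{X}) \le o(Q_k(X)^*) \otimes o(P_k^0(X)). \]
So the entire task is to bound each of these two factors in terms of $o(X)$.

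First I would handle $o(Q_k(X)^*)$: by Theorem~\ref{higman-type} it equals exactly $h(o(Q_k(X)))$. For $o(P_k^0(X))$, note that $P_k^0(X) = \pfin{Q_k(X)}$, so the bound of Abriola et~al.\ \cite{Abriola} gives $o(P_k^0(X)) \le 2^{o(Q_k(X))}$. Thus both estimates reduce to bounding $o(Q_k(X))$, and here I would redo the short induction already underlying Theorem~\ref{upper}: from $Q_k(X) = \coprod_{i=0}^{k-1} P_i(X)$ one gets $o(Q_k(X)) = \bigoplus_{i=0}^{k-1} o(P_i(X))$, while $o(P_0(X)) = o(X) = p_0(o(X))$ and, for $k>0$, $o(P_k(X)) = -1 + o(\pfin{Q_k(X)}) \le -1 + 2^{o(Q_k(X))}$ again by \cite{Abriola}; feeding in the inductive hypothesis $o(Q_k(X)) \le q_k(o(X))$ and using that $\bigoplus$, $\beta \mapsto 2^\beta$ and $\beta \mapsto -1+\beta$ are all non-decreasing yields $o(P_k(X)) \le p_k(o(X))$, hence $o(Q_k(X)) \le q_k(o(X))$.

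Putting the pieces together, using that $h$ and $\otimes$ are non-decreasing, I obtain
\[ o(\finseq{\omega^k+1}{X}) \le h(q_k(o(X))) \otimes 2^{q_k(o(X))}. \]
It then remains only to recognize the right-hand side as $g^+_k(o(X))$: one checks the elementary identity $2^{q_k(\beta)} = 1 + p_k(\beta)$ (splitting into the cases $q_k(\beta)=0$, $q_k(\beta)$ finite and nonzero, and $q_k(\beta)$ infinite), so that the bound reads $f_k(o(X)) \otimes (1 + p_k(o(X))) = g^+_k(o(X))$, as desired.

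I do not expect any genuine obstacle: all of the content is already in Proposition~\ref{mainthmplus}. The only thing needing care is the ordinal-arithmetic bookkeeping — confirming that the operations involved ($\beta \mapsto 2^\beta$, $h$, $\otimes$, $\oplus$) are monotone in the required sense, and pinning down $2^{q_k(\beta)} = 1 + p_k(\beta)$ (equivalently $o(\pfin{Y}) = 1 + o(\pfinz{Y})$) precisely enough that the derived bound matches the stated $g^+_k$ exactly, rather than merely up to a harmless $+1$.
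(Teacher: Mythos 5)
Your proof is correct and is exactly the paper's intended argument: the paper's own proof of Proposition~\ref{plusone} is the single sentence that it ``follows immediately from Proposition~\ref{mainthmplus},'' and you have supplied precisely the routine bookkeeping that sentence elides (monotonic surjections do not increase $o$, $o(X\times Y)=o(X)\otimes o(Y)$, Theorem~\ref{higman-type}, the bound of Abriola et~al., the induction giving $o(Q_k(X))\le q_k(o(X))$, and monotonicity of $h$, $2^{(\cdot)}$, $\oplus$, $\otimes$). The only caveat is that your identification $P_k^0(X)=\pfin{Q_k(X)}$ and the identity $2^{q_k(\beta)}=1+p_k(\beta)$ both presuppose $k>0$ (for $k=0$ one has $q_0(\beta)=0$ while $p_0(\beta)=\beta$, so one should instead bound $o(P_0^0(X))=o(P_0(X)\cup\{\emptyset\})\le 1+o(X)=p^+_0(o(X))$ directly); this degenerate case is trivial, and the paper's own definition of $P_k^0$ carries the same quirk, but it is worth flagging so that the chain of inequalities is valid for every $k$.
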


\begin{proof}
This follows immediately from Proposition~\ref{mainthmplus}.
\end{proof}

\begin{thm}
\label{plusone-gen}
Suppose $\alpha=\omega^{k_0} + \ldots + \omega^{k_r} + \ell$, where $\omega >
k_0 \ge \ldots \ge k_r > 0$ and $1\le\ell<\omega$.

Then
\begin{multline*}
o(\finseq{\alpha}{X}) \le
\bigoplus_{i=0}^{r-1} \left( \left(\bigotimes_{j=0}^{i-1}
g_{k_j}(o(X))\right)\otimes f_{k_i}(o(X)) \right)
\oplus\\
\left(\left(\bigotimes_{j=0}^{r-1}
g_{k_j}(o(X))\right)\otimes g^+_{k_r}(o(X)) \right)
\oplus
\left(\left(\bigotimes_{j=0}^r
g_{k_j}(o(X))\right)\otimes \left(\bigoplus_{t=1}^{\ell-1}o(X)^{\otimes
t}\right)\right)
.
\end{multline*}
\end{thm}

\begin{rem}
Note that the final term in this sum is not actually unique to this theorem;
if one were to apply Theorem~\ref{finseq-gen} in this case and expand it out,
one would see it there too (since $f_0(\beta)=1$ and $g_0(\beta)=\beta)$). It is
only the second-to-last term, that involves $g_{k_r}^+$, that is different.
\end{rem}

\begin{proof}
For any sequence of length less than $\alpha$, either its length is less than
$\omega^{k_0}+\ldots+\omega^{k_{r-1}}$, or it is at least that long.  In the
former case, we apply Theorem~\ref{finseq-gen}.  In the latter case, we can
break it down into a sequence of length $\omega^{k_0}+\ldots+\omega^{k_{r-1}}$
(to which Theorem~\ref{finseqeq-gen} applies) and a remainder of length less
than $\omega^{k_r}+\ell$, i.e., at most $\omega^{k_r}+(\ell-1)$.

We can then break this down into the case where the length of the remainder is
less than or equal to $\omega^{k_r}$, and the case where it is not, i.e., where
it is equal to $\omega^{k_r}+t$ for some $1\le t<\ell-1$.

Then the first case gives us the first part of the sum, the second case gives us
the second part of the sum, and the third case gives us the third part of the
sum.
\end{proof}

\begin{exmp}
Let's consider what this gets us when $\alpha=\omega^2+1$ and $o(X)=\omega$.
If we apply Theorem~\ref{finseq-gen}, we see that
\[o(\finseq{\omega^2+1}{X})\le
f_2(\omega) \oplus (f_0(\omega)\otimes g_2(\omega)) =
\omega^{\omega^{\omega2}+2}+\omega^{\omega^{\omega 2}}. \]
However, we apply Theorem~\ref{plusone}, we see that in fact
\[o(\finseq{\omega^2+1}{X})\le g^+_2(\omega) = \omega^{\omega^{\omega2}+2}. \]
\end{exmp}

It's probably possible to improve on this further, but we will stop here.

\section{Lower bounds}
\label{seclower}

We now turn to the question of lower bounds.  We'll prove our lower bounds on
$\finseq{\omega^2}{X}$ by embedding another ordering into it.

\begin{prop}
\label{genlower}
Suppose $X$ is a well partial order with elements $v_1, \ldots, v_k$ such that
each $v_i$ is maximal in $X\setminus\{v_{i+1},\ldots,v_k\}$.  Let
$Y=X\setminus\{v_1,\ldots,v_k\}$.  Then there is an embedding of
$(\pfinnz)^k(Y)$ into $\finseqeq{\omega^k}{X}$ (the image of which, for $k>0$,
consists of indecomposable sequences) and an embedding of
$(\pfinnz)^k(Y)^*$ into $\finseq{\omega^{k+1}}{X}$.
\end{prop}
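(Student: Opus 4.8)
The plan is to induct on $k$, essentially running the construction of Section~\ref{secproof} in reverse: where $\varphi_X$ collapsed nested power sets onto indecomposable sequences, here I build an \emph{embedding} (an order-preserving and order-reflecting map) in the opposite direction, using $v_1,\ldots,v_k$ as ``markers'' that let one recover the power-set structure from the sequence. For the base case $k=0$ I take $e_0\colon Y\to\finseqeq{1}{X}$ to be the inclusion (reading $y\in Y$ as the length-$1$ sequence), which is trivially an embedding, and $e_0^*\colon Y^*\to X^*$ the induced map on strings. For the inductive step, set $X'=X\setminus\{v_k\}$ and observe that $v_1,\ldots,v_{k-1}$ satisfy the hypothesis of the proposition relative to $X'$, with the same $Y$; by induction I obtain an embedding $e_{k-1}\colon(\pfinnz)^{k-1}(Y)\to\finseqeq{\omega^{k-1}}{X'}$ whose image consists of indecomposable sequences and, crucially, uses no symbol equal to $v_k$.

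Given this, I would define $e_k$ on $S=\{z_0,\ldots,z_{p-1}\}\in(\pfinnz)^k(Y)=\pfinnz((\pfinnz)^{k-1}(Y))$ by
\[ e_k(S)=\bigl(v_k\,e_{k-1}(z_0)\,v_k\,e_{k-1}(z_1)\cdots v_k\,e_{k-1}(z_{p-1})\bigr)^\omega. \]
Since each $e_{k-1}(z_i)$ has length $\omega^{k-1}$, the bracketed block has length $\omega^{k-1}\cdot p$ and $e_k(S)$ has length $\omega^k$ and is indecomposable (being an $\omega$-power), so it lies in $\indec{\omega^k}{X}\subseteq\finseqeq{\omega^k}{X}$. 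Monotonicity and well-definedness up to equivalence follow exactly as in Proposition~\ref{repeat} once one notes $v_k\,e_{k-1}(z_i)\le v_k\,e_{k-1}(w_{j})$ whenever $z_i\le w_{j}$. The embedding $e_k^*$ is then defined by concatenation, $e_k^*(S_1\cdots S_m)=e_k(S_1)\cdots e_k(S_m)$, of length $\omega^k\cdot m<\omega^{k+1}$, with monotonicity immediate from that of $e_k$.

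The real content is order-reflection, and the tool I would isolate first is the following lemma: if $u$ is an indecomposable finite-image sequence whose length $\lambda$ is a power of $\omega$ and $u\le c_1c_2\cdots c_s$ for finitely many $c_t$, then $u\le c_t$ for some single $t$. This follows by splitting $u$ along the witnessing embedding into contiguous factors $u^{(1)}\cdots u^{(s)}$ with $u^{(t)}\le c_t$; since $\lambda$ is additively indecomposable, the last nonempty factor has length $\lambda$, and as it is a tail of $u$, indecomposability gives $u\le u^{(s_0)}\le c_{s_0}$. To reflect $e_k$, suppose $e_k(S)\le e_k(T)$ via a strictly increasing $\psi$. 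Because $v_k$ is maximal in $X$ (the $i=k$ instance of the hypothesis), no symbol other than $v_k$ is $\ge v_k$, so every target symbol $\ge v_k$ is a $v_k$-marker; hence $\psi$ sends source markers to target markers. (The full staircase condition is what makes the inductive hypothesis applicable to $X'$ at each level.) Each source block $e_{k-1}(z_i)$ therefore maps into the region strictly between two aligned target markers, which is a finite concatenation of target blocks $e_{k-1}(w_j)$ separated by single markers; the lemma forces $e_{k-1}(z_i)\le e_{k-1}(w_j)$ for one $j$ (the length obstruction rules out a marker factor), and reflection of $e_{k-1}$ yields $z_i\le w_j$. As this holds for every $i$, we get $S\le T$.

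The step I expect to be the main obstacle is reflecting the string embedding $e_k^*$, where one must produce a \emph{strictly increasing} matching of blocks. Given $e_k^*(S_1\cdots S_m)\le e_k^*(T_1\cdots T_n)$ via $\psi$, the lemma applied to each indecomposable block $e_k(S_i)$ yields some $T_{\sigma(i)}$ with $e_k(S_i)\le e_k(T_{\sigma(i)})$, and refining the lemma to record that $\sigma(i)$ is the \emph{last} target block whose intersection with $\psi(e_k(S_i))$ has order type $\omega^k$ makes $\sigma$ nondecreasing. Strictness is the delicate point: if $\sigma(i)=\sigma(i+1)$ then $e_k(T_{\sigma(i)})$, of length $\omega^k$, would contain two disjoint, consecutively placed subsets each of order type $\omega^k$, forcing its order type to be at least $\omega^k\cdot2$, a contradiction. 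Thus $\sigma$ is strictly increasing and, by reflection of $e_k$, witnesses $S_1\cdots S_m\le T_1\cdots T_n$ in the string order. I would then double-check the bookkeeping around markers at the ends of the between-markers regions and the finite-image condition throughout, but these I expect to be routine.
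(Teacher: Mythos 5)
There is a genuine gap, and it is fatal to the construction as written: your recursion bottoms out with a marker at the innermost level, $e_1(S)=(v_1\,y_0\,v_1\,y_1\cdots v_1\,y_{p-1})^\omega$, and there the parenthetical step ``the length obstruction rules out a marker factor'' fails, because the inner blocks $e_0(z_i)$ have length $1$ --- the same length as a marker factor. Your lemma then only yields $e_0(z_i)\le c_t$ where $c_t$ may be the marker itself, and this is not a removable technicality: $e_1$ genuinely fails to be order-reflecting. Concretely, take $X=\{a,b,v_1\}$ with $a<v_1$ and $b$ incomparable to both; $v_1$ is maximal in $X$, so the hypothesis holds with $k=1$, and maximality (unlike being a maximum) does not prevent elements of $Y$ from lying \emph{below} $v_1$. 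Then $e_1(\{a\})=(v_1a)^\omega\le(v_1b)^\omega=e_1(\{b\})$ via the strictly increasing map $i\mapsto 2i$, which matches every source letter to a marker position (using $a\le v_1$), yet $\{a\}\not\le\{b\}$ in $\pfinnz(Y)$. By monotonicity the failure propagates upward through every level (e.g.\ $e_2(\{\{a\}\})\le e_2(\{\{b\}\})$), so the induction establishing reflection of $e_k$ collapses at its first step, for every $k\ge1$.

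The paper avoids exactly this by leaving the innermost level unmarked: in its recursion $\psi_{\ell+1}(S)=(v_\ell\,\psi_\ell(T_1)\cdots v_\ell\,\psi_\ell(T_r))^\omega$ the marker index lags the level by one, so $\psi_1(S)=(T_1\cdots T_r)^\omega$ with $T_i\in Y$, where reflection is automatic (every target letter is itself an element of $T$), and markers only ever wrap blocks of infinite length $\omega^\ell$ with $\ell\ge1$ --- precisely the regime where your indecomposability lemma does rule out absorption into markers; the leftover marker $v_k$, maximal in all of $X$, is then spent as the separator in the string map $\psi'_k(S_1\cdots S_r)=v_k\psi_k(S_1)\cdots v_k\psi_k(S_r)$. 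The fix to your argument is therefore local: delete the marker from $e_1$ (shift your markers up one level), after which the rest of your proof goes through; your $k$ markers even leave one to spare. Two points in your write-up are independently worthwhile: your lemma that an indecomposable $u\le c_1\cdots c_s$ satisfies $u\le c_t$ for a single $t$ actually tightens the paper's somewhat informal claim that each source block is ``wholly contained'' in a single target block (elements of $Y$ below a marker can be matched to markers at higher levels too, and the length argument is what really closes this), and your separator-free treatment of the string embedding --- getting strictness of $\sigma$ from the fact that a block of type $\omega^k$ cannot contain two successively placed subsets each of type $\omega^k$ --- is sound and differs from the paper's marker-based argument. But neither observation rescues the bottom level, where no length obstruction exists.
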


\begin{rem}
This proposition can obviously be applied to e.g.~$Y+k$ or $Y\amalg k$ or $Y$
together with antichain of size $k$, but this proposition can actually be
applied to any well partial order $X$ with $o(X)=\beta+k$.  Since (as mentioned
in Section~\ref{prelim} and proved in \cite{DJP}) any $X$ with $o(X)$ equal to a
successor ordinal has a maximal element, if $o(X)=\beta+k$, then one can take a
maximal element $v_k$, remove it, and iterate this process $k$ times to obtain
$v_1,\ldots, v_k$ as above.
\end{rem}

\begin{proof}
We'll start by constructing the embedding
\[\psi_k:(\pfinnz)^k(Y)\to \indec{\omega^k}{Y\cup\{v_1,\ldots,v_k\}}.\]  We
construct this by induction on $k$.  For $k=0$, $\psi_0:Y\to Y$ will simply be
the identity map.  Now, if we have defined $\psi_\ell$, and we have
$S=\{T_1,\ldots,T_r\}\in (\pfinnz)^{\ell+1}(Y)$, we define \[ \psi_{\ell+1}(S) =
(v_\ell\psi_\ell(T_1) v_\ell\psi_\ell(T_2)\cdots v_\ell\psi_\ell(T_r))^\omega
.\]  We wish to show that this is well-defined up to equivalences, and an
embedding.

We show this by induction on $\ell$. By inductively applying
Proposition~\ref{repeat}, we can see that it's well-defined up to equivalences
and monotonic; it's also easy to see that it's indecomposable.  And if $k=0$, it's obviously also an
embedding.  Suppose it's an embedding for $\ell$ and we wish to prove it for
$\ell+1$.  So suppose that $\psi_{\ell+1}(S) \le \psi_{\ell+1}(S')$.  Since
$v_\ell$ is maximal in $Y\cup\{v_1,\ldots,v_\ell\}$, each
$v_\ell$ that occurs in $\psi_{\ell+1}(S)$ must map to a $v_\ell$ in
$\psi_{\ell+1}(S')$.  But since each $v_\ell \psi_\ell(T)$ (for $T\in
(\pfinnz)^\ell(Y)$) begins with a $v_\ell$, this means that each
$v_\ell \psi_\ell(T)$ (for $T\in S)$ must be wholly contained within a single
$v_\ell \psi_\ell(T')$ (for $T'\in S')$, and so $\psi_\ell(T)$ is wholly
contained within $\psi_\ell(T')$.  Thus we conclude that for each $T\in S$,
there is some $T'\in S'$ such that $\psi_\ell(T)\le \psi_\ell(T')$.  Since
$\psi_\ell$ is an embedding by the inductive hypothesis, we get that for each
$T\in S$ there is some $T'\in S'$ such that $T\le T'$; this means that $S\le
S'$ and so $\psi_{\ell+1}$ is an embedding as claimed.

We can now define
$\psi'_k:(\pfinnz)^k(Y)*\to\finseq{\omega^{k+1}}{Y\cup\{v_1,\ldots,v_k\}}$ by
$\psi'_k(S_1\cdots S_r)=v_k \psi_k(S_1)v_k \psi_k(S_2)\cdots v_k \psi_k(S_r)$.
This is clearly monotonic, and, by a similar argument as to above, it is also an
embedding.  This proves the claim.
\end{proof}

Now, we'll need a family of well partial orders $H_\beta$ such that
$o(H_\beta)=\beta$ and such that we get large enough types upon performing other
operations on $H_\beta$.  Fortunately, \cite{Abriola} provides us with one.

\begin{defn}
Following Abriola et.~al.~\cite{Abriola}, define the well partial orders
$H_\beta$ recursively by
\begin{enumerate}
\item $H_1 = 1$
\item $H_\omega = \sum_{k<\omega} \coprod_{i<k} 1$
\item For $\gamma>0$, $H_{\omega^{\omega^\gamma}} =
\sum_{\delta<\omega^\gamma} H_{\omega^{\delta}}$
\item For $H_{\omega^\gamma}$ where $\gamma$ is not a power of $\omega$, write
$\gamma=\omega^{\delta_0}+\ldots+\omega^{\delta_r}$ ($\delta_i$ weakly
decreasing); then $H_{\omega^\gamma} = \prod_i H_{\omega^{\delta_i}}$, where the
product is a lexicographic product.
\item If $\beta$ is not a power of $\omega$, write
$\beta=\omega^{\gamma_0}+\ldots+\omega^{\gamma_r}$ ($\gamma_i$ weakly
decreasing); then $H_\beta = \coprod_i H_{\omega^{\gamma_i}}$.
\end{enumerate}
\end{defn}

Here the sums are ordered sums, i.e., concatenation.

\begin{rem}
There are any number of other similar ways one could construct a family
$K_\alpha$ with the same properties as $H_\alpha$.  For instance, one could
define
\begin{enumerate}
\item $K_1 = 1$
\item $K_{\omega^{\gamma+1}} = \sum_{k<\omega} K_{\omega^\gamma k}$
\item $K_{\omega^\gamma} = \sum_{\gamma'<\gamma} K_{\omega^{\gamma'}}$ if
$\gamma$ is a limit ordinal
\item If $\beta$ is not a power of $\omega$, write
$\beta=\omega^{\gamma_0}+\ldots+\omega^{\gamma_r}$ ($\gamma_i$ weakly
decreasing); then $K_\beta = \coprod_i K_{\omega^{\gamma_i}}$.
\end{enumerate}
This is not always isomorphic, as can be seen by taking
$\alpha=\omega^{\omega^2+\omega}$ ($H_{\omega^{\omega^2+\omega}}$ has two
elements of height $\omega^{\omega^2}$, whereas $K_{\omega^{\omega^2+\omega}}$
has only one), but it would work just as well.  We will stick to the family used
by Abriola et.~al., however.
\end{rem}

Abriola et.~al.~showed:

\begin{prop}[{\cite[Proposition~4.2 and Theorem~4.3]{Abriola}}]
\label{haworks}
For any $\beta$, one has $o(H_\beta)=\beta$ and $o(\pfin{H_\beta})=2^\beta$.
\end{prop}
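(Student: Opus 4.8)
The plan is to establish both equalities together by transfinite induction on $\beta$, following the clauses of the recursive definition of $H_\beta$. The base clauses are $H_1=1$, for which $o(H_1)=1$ while $\pfin{H_1}$ is a two-element chain so $o(\pfin{H_1})=2=2^1$, and $H_\omega=\sum_{k<\omega}\coprod_{i<k}1$, for which $o(H_\omega)=\sum_{k<\omega}k=\omega$ as an ordinary ordinal sum. For the inductive step, the identity $o(H_\beta)=\beta$ rests on three arithmetic facts about how $o$ behaves under the operations in play: $o(X\amalg Y)=o(X)\oplus o(Y)$, recalled in Section~\ref{prelim}; $o\!\left(\sum_{i<\delta}X_i\right)=\sum_{i<\delta}o(X_i)$ (\emph{ordinary} ordinal sum) whenever the ordered sum is a well-quasi-order; and $o$ of a lexicographic product of well-quasi-orders being an ordinary ordinal product of the $o$'s of the factors. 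The second and third should be derivable from the De Jongh--Parikh inductive characterization of $o$ as the least ordinal exceeding $o(Z')$ over all proper lower sets $Z'$ \cite{DJP}, by describing the lower sets of an ordered sum and of a lexicographic product (one first checks, routinely, that these ordered sums and lexicographic products really are well-quasi-orders); alternatively one may cite them. Granting these, each recursion clause matches a Cantor-normal-form identity — $\omega^\gamma$ factors as an ordinary ordinal product of powers $\omega^{\omega^\eta}$; $\omega^{\omega^\gamma}=\sum_{\delta<\omega^\gamma}\omega^\delta$ (an ordered sum of strictly increasing powers of $\omega$ indexed by a limit ordinal, hence equal to its supremum); and a $\beta$ that is not a power of $\omega$ is the natural sum of its Cantor-normal-form terms — so the induction closes.

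For $o(\pfin{H_\beta})=2^\beta$, one inequality is cheap: $o(\pfin{H_\beta})\le 2^{o(H_\beta)}=2^\beta$ by the bound of Abriola et al.~\cite{Abriola} (or, for a self-contained argument, the structural analysis below yields a matching upper bound). The reverse inequality $o(\pfin{H_\beta})\ge 2^\beta$ is again proved by induction on $\beta$. The disjoint-union clause is immediate: $\pfin{X\amalg Y}\cong\pfin{X}\times\pfin{Y}$ under $\le_{\mathrm{m}}$, so $o(\pfin{X\amalg Y})=o(\pfin{X})\otimes o(\pfin{Y})$, and together with the inductive values and the identity $2^{\beta\oplus\gamma}=2^\beta\otimes 2^\gamma$ this gives $2^\beta$. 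The ordered-sum clause needs a look at $\le_{\mathrm{m}}$ on $\pfin{\sum_{i<\delta}X_i}$: a finite subset $S$ decomposes as $\bigsqcup_i S_i$ with $S_i\subseteq X_i$, and the key observation is that whether $S\le_{\mathrm{m}}T$ is determined solely by the largest index $m$ with $S_m\neq\emptyset$ together with the trace $S_m$ there, since every element of a strictly lower block is automatically dominated as soon as $T$ meets a strictly higher block. Hence $\pfin{\sum_{i<\delta}X_i}$, modulo equivalence, is the ordered sum $\sum_{i<\delta}\pfinz{X_i}$ with a bottom element adjoined, so $o\!\left(\pfin{\sum_{i<\delta}X_i}\right)=1+\sum_{i<\delta}o(\pfinz{X_i})$; feeding in $o(\pfin{H_{\omega^\delta}})=2^{\omega^\delta}$, and using that the nonzero such values are powers of $\omega$ (so the stray $-1$'s and $+1$'s and all small terms are absorbed and the ordered sum climbs to $2^{\omega^{\omega^\gamma}}=2^\beta$), produces exactly $2^\beta$.

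The hard part, I expect, is the lexicographic-product clause, which demands a comparably precise description of $\le_{\mathrm{m}}$ on $\pfin{X\times_{\mathrm{lex}}Y}$ for a lexicographic product with $X$ the major factor. A finite subset $S$ there is captured by its projection $\pi_X(S)$ together with a nonempty fiber $S_x\subseteq Y$ over each $x\in\pi_X(S)$, and $S\le_{\mathrm{m}}T$ holds precisely when every $x\in\pi_X(S)$ either lies strictly below some point of $\pi_X(T)$ or lies in $\pi_X(T)$ with $S_x\le_{\mathrm{m}}T_x$. Extracting an order type from this combinatorial description — and checking that for the relevant factors it comes out to $2^{o(X\times_{\mathrm{lex}}Y)}$ rather than the strictly smaller product of the factors' $2^{o(\cdot)}$'s — is the main obstacle, and it is exactly the point where the gap between $2^{ab}$ and $2^a\cdot 2^b$ has to be made up; this is presumably the most delicate bookkeeping in the argument of Abriola et al. A recurring convenience throughout is that every ordinal that appears ($2^{\omega^\delta}$, the types of the building blocks, and so on) is a power of $\omega$, hence additively principal, so ordinary and natural sums agree wherever both could arise and $2^{(\cdot)}$ really does convert $\oplus$ into $\otimes$.
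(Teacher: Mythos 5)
First, a point of reference: the paper offers no proof of this proposition at all --- it is imported verbatim from Abriola et al.\ (their Proposition~4.2 and Theorem~4.3), as the bracketed attribution in the statement indicates. So the only question is whether your reconstruction would stand on its own, and it would not. The decisive problem is the lexicographic-product clause, which you explicitly set aside as ``the main obstacle'' and ``presumably the most delicate bookkeeping in the argument of Abriola et al.'' That step is not a loose end to be tidied later; it is where the entire content of the theorem lives. The inequality $o(\pfin{X})\le 2^{o(X)}$ is already the cited upper bound, and the whole point of the $H_\beta$ construction is to close the gap between $2^{\alpha\cdot\beta}$ and $2^\alpha\cdot 2^\beta$ at exactly the product stage; a proof that defers exactly that computation has established nothing beyond what the other clauses give.

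Second, one of the three ``arithmetic facts'' on which you rest the $o(H_\beta)=\beta$ half is false as a general statement: it is not true that $o$ of a lexicographic product of well partial orders is the ordinary ordinal product of the $o$'s of the factors. Take the major factor to be an antichain of size $2$ and the minor factor to be $\omega\amalg 1$: the lexicographic product is then two incomparable copies of $\omega\amalg 1$, of type $(\omega+1)\oplus(\omega+1)=\omega\cdot 2+2$, whereas the ordinal product is $(\omega+1)\cdot 2=\omega\cdot 2+1$ (and $2\cdot(\omega+1)=\omega+2$ under the other convention). The identity does hold for the particular factors appearing in the construction, whose types are of the form $\omega^{\omega^\delta}$ and hence multiplicatively indecomposable, but that is a fact to be proved about those specific factors, not a citable general principle --- so even the ``easy'' half of your induction is incomplete at the same clause. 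The base cases, the disjoint-union clause (via $\pfin{X\amalg Y}\cong\pfin{X}\times\pfin{Y}$ and $2^{\beta\oplus\gamma}=2^\beta\otimes 2^\gamma$), the ordered-sum identity for $o$, and the reduction of $\pfin{\sum_i X_i}$ to $1+\sum_i\pfinz{X_i}$ are all sound.
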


We'll also need one other property of this family:

\begin{prop}
\label{hembed}
If $\beta\le\beta'$, then $H_\beta$ embeds in $H_\beta'$.
\end{prop}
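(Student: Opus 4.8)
The plan is to prove Proposition~\ref{hembed} by induction on the structure of the Cantor normal forms of $\beta$ and $\beta'$, mirroring the five-case recursive definition of $H_\beta$. The key observation is that all the building operations used in the definition — ordered sum $\sum$, disjoint union $\coprod$, and lexicographic product $\prod$ — are monotone with respect to embeddability: if $A_i$ embeds in $B_i$ for each $i$ (compatibly, over the same index set, or an index set that itself embeds), then $\sum_i A_i$ embeds in $\sum_i B_i$, and similarly for $\coprod$ and the lexicographic product. So the real content is bookkeeping with Cantor normal forms to check that whenever $\beta \le \beta'$, the recursive "recipe" for $H_\beta$ sits inside the recipe for $H_{\beta'}$ in the appropriate way.

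First I would reduce to the case where both $\beta$ and $\beta'$ are powers of $\omega$. If $\beta = \omega^{\gamma_0} + \ldots + \omega^{\gamma_r}$ and $\beta' = \omega^{\gamma'_0} + \ldots + \omega^{\gamma'_s}$ in Cantor normal form with $\beta \le \beta'$, then $H_\beta = \coprod_i H_{\omega^{\gamma_i}}$ and $H_{\beta'} = \coprod_j H_{\omega^{\gamma'_j}}$; one finds an order-preserving injection of the index multiset $\{\gamma_0,\ldots,\gamma_r\}$ into $\{\gamma'_0,\ldots,\gamma'_s\}$ with $\gamma_i \le \gamma'_{\sigma(i)}$ (this is a standard fact about CNF and the ordinal order), and then invokes the power-of-$\omega$ case componentwise together with monotonicity of $\coprod$. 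So it remains to show: if $\gamma \le \gamma'$ then $H_{\omega^\gamma}$ embeds in $H_{\omega^{\gamma'}}$.

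For this sub-claim I would again induct, splitting on whether $\gamma$ and $\gamma'$ are powers of $\omega$. If neither is, write $\gamma = \omega^{\delta_0}+\ldots$ and $\gamma' = \omega^{\delta'_0}+\ldots$; since $\gamma\le\gamma'$ one gets a monotone injection of the exponent multisets as before, and $H_{\omega^\gamma} = \prod_i H_{\omega^{\delta_i}}$ embeds in $H_{\omega^{\gamma'}} = \prod_j H_{\omega^{\delta'_j}}$ by monotonicity of the lexicographic product (padding missing factors with the one-point order $H_1 = 1$, which embeds in everything). The remaining cases are when $\gamma$ or $\gamma'$ is itself a power of $\omega$, say $\gamma = \omega^{\zeta}$, handled by clause~(3) of the definition: $H_{\omega^{\omega^\zeta}} = \sum_{\delta < \omega^\zeta} H_{\omega^\delta}$, which is an increasing ordered sum, so $\zeta \le \zeta'$ gives $\omega^\zeta \le \omega^{\zeta'}$ and hence $\sum_{\delta<\omega^\zeta} H_{\omega^\delta}$ is an initial segment of $\sum_{\delta<\omega^{\zeta'}} H_{\omega^\delta}$, which certainly embeds; the mixed case (one side a power of $\omega$, the other not) is handled by relating a single-term CNF to a multi-term one and again using monotonicity of $\sum$ or $\coprod$. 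Throughout, the base cases $H_1 = 1$ and $H_\omega = \sum_{k<\omega}\coprod_{i<k}1$ are immediate.

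The main obstacle I expect is not any single hard step but getting the induction to be genuinely well-founded and the case analysis complete: the recursion for $H$ branches on "is the ordinal a power of $\omega$?" at two nested levels (the exponent, and the exponent of the exponent), so one must be careful that in each recursive call the relevant ordinal parameter has strictly decreased in the appropriate sense (e.g. $\delta < \omega^\zeta$ in clause~(3), or passing from $\omega^\gamma$ to its CNF summands $\omega^{\delta_i}$ with $\delta_i < \gamma$). Phrasing the induction as "on $\beta'$" (or on the ordinal $\max(\beta,\beta')$) and checking that every recursive reference on both the $H_\beta$ side and the $H_{\beta'}$ side involves a smaller ordinal should make this go through cleanly. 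The monotonicity lemmas for $\sum$, $\coprod$, $\prod$ are routine and I would state them once at the start rather than reprove them in each case.
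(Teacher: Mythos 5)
Your proposed reduction rests on a claim that is false. You assert that if $\beta=\omega^{\gamma_0}+\ldots+\omega^{\gamma_r}\le\beta'=\omega^{\gamma'_0}+\ldots+\omega^{\gamma'_s}$ in Cantor normal form, then there is an injection $\sigma$ of the exponent multiset of $\beta$ into that of $\beta'$ with $\gamma_i\le\gamma'_{\sigma(i)}$, and you call this ``a standard fact about CNF.'' It is not: take $\beta=\omega+2=\omega^1+\omega^0+\omega^0$ and $\beta'=\omega\cdot2=\omega^1+\omega^1$. Then $\beta<\beta'$ but the exponent multiset of $\beta$ has three elements and that of $\beta'$ has two, so no injection exists. (The conclusion $H_{\omega+2}\hookrightarrow H_{\omega\cdot2}$ is still true, but only because $1\amalg1$ embeds into $H_\omega$ --- i.e., because two whole summands of $H_\beta$ must be packed into a \emph{single} summand of $H_{\beta'}$, which your componentwise argument cannot produce.) The same false claim is invoked again in your treatment of the lexicographic-product case.

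This is not a cosmetic slip: the step you wave away is where essentially all the content of the proposition lives. What actually happens when $\beta\le\beta'$ is that the CNF exponents agree up to some first index $k$ with $\gamma_k<\gamma'_k$, after which \emph{nothing} is known about how the remaining terms of $\beta$ compare to the remaining terms of $\beta'$; one must embed the entire tail $\coprod_{i\ge k}H_{\omega^{\gamma_i}}$ into the single factor $H_{\omega^{\gamma'_k}}$. That requires showing that a finite disjoint union of $H_{\omega^{\gamma_i}}$ with all $\gamma_i<\gamma'_k$ fits inside one $H_{\omega^{\gamma'_k}}$, and this is where the real case analysis on $\gamma'_k$ occurs (power of $\omega$: use that the ordered sum $\sum_{\delta<\omega^\gamma}H_{\omega^\delta}$ has infinitely many summands, so the finitely many pieces can be placed in distinct positions; successor $\delta+1$: use $H_{\omega^{\delta+1}}=H_{\omega^\delta}\cdot H_\omega$ and the infiniteness of $H_\omega$; limit but not a power of $\omega$: interpolate a successor $\gamma''$ and cancel common top CNF terms in the product to reduce to the power-of-$\omega$ case). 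Your proposal never confronts this disjoint-union-into-one-summand step, so the induction as you describe it does not go through. The monotonicity lemmas for $\sum$, $\coprod$, $\prod$ that you do state are fine but are not where the difficulty is.
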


\begin{proof}
We induct on $\beta'$.  Write $\beta$ in Cantor normal form as
$\gamma=\omega^{\gamma_0} + \ldots + \omega^{\gamma_r}$ with $\gamma_i$ weakly
decreasing, and similarly write $\beta'=\omega^{\gamma'_0} + \ldots +
\omega^{\gamma'_s}$.

Since $\beta\le\beta'$, either $\gamma=\gamma'_i$ for all $i$ and we have $r\le
s$, or there must be some smallest $k$ such that $\gamma_k < \gamma'_k$.  In
the first case, the embedding is immediate by the construction.

In the second case, we can use $H_{\omega^{\gamma_i}} = H_{\omega^{\gamma'_i}}$
for $i<k$; the problem then is to embed $\coprod_{i\ge k} H_{\omega^{\gamma_i}}$
inside $H_{\omega^{\gamma'_k}}$.

Since we are in the case where $\gamma_k < \gamma'_k$, we cannot have
$\gamma'_k=0$.  If $\gamma'_k$ is a power of $\omega$, then the embedding is
immediate by the construction.  If $\gamma'_k=\delta+1$, then by the inductive
hypothesis we may embed each $\gamma_i$ for $i>k$ into $H_\delta$; we can then
embed the whole into $H_{\omega^{\gamma'_k}}=H_{\omega^\delta}\cdot H_\omega$
since $H_\omega$ is infinite.

This leaves the case where $\gamma'_k$ is a limit ordinal but not a power of
$\omega$.  In this case, we may choose some successor ordinal $\delta$ with
$\gamma_k\le\gamma''<\gamma'_k$; then the whole embeds into
$H_{\omega^{\gamma''}}$ by the above, and the problem is now to embed
$H_{\omega^{\gamma''}}$ into $H_{\gamma'_k}$.

So expand $\gamma'_k$ into Cantor normal form as the sum of
$\omega^{\delta'_i}$, and $\gamma''$ into Cantor normal form as the sum of
$\omega^{\delta''_i}$.  By the product construction, we may once again cancel
common terms at the top, meaning we may reduce to the case where $\gamma'_k$ is
a power of $\omega$.  But this case was already handled; this completes the
proof.
\end{proof}

We now define our ``triply exponential'' function that we will use as our lower
bound.

\begin{defn}
Define a function $u$ by $u(\beta)=h(-1+2^{\beta-1})$ if $\beta$ is a successor
ordinal, $u(\beta)=\omega^{\omega^{2^\beta}}$ if $\beta$ is a limit ordinal, and
$u(0)=1$.
\end{defn}

We've defined this function so as to ensure it is continuous.

\begin{prop}
The function $u$ is continuous.
\end{prop}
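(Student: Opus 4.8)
The plan is to verify continuity of $u$ directly from the defining cases. Recall that $u(\beta) = h(-1+2^{\beta-1})$ when $\beta$ is a successor, $u(\beta) = \omega^{\omega^{2^\beta}}$ when $\beta$ is a nonzero limit, and $u(0)=1$. To show $u$ is continuous, it suffices to fix a limit ordinal $\lambda$ and show $u(\lambda) = \sup_{\beta<\lambda} u(\beta)$; strict monotonicity (which I would check first, and which follows from monotonicity of $h$, of $\beta\mapsto 2^\beta$, and of $\beta\mapsto -1+2^{\beta-1}$ on successors, together with a comparison across the case boundary) guarantees the sup is a genuine limit rather than a maximum, so continuity is exactly this sup identity.

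**Reducing to successors approaching $\lambda$.** Since $\lambda$ is a limit, the successor ordinals below $\lambda$ are cofinal in $\lambda$, so $\sup_{\beta<\lambda} u(\beta) = \sup_{\gamma<\lambda,\ \gamma\text{ successor}} u(\gamma) = \sup_{\delta<\lambda} u(\delta+1) = \sup_{\delta<\lambda} h(-1+2^{\delta}).$ So the whole problem becomes: show $h(-1+2^{\delta})$, as $\delta$ ranges below $\lambda$, has supremum $\omega^{\omega^{2^\lambda}}$. Now $2^\lambda = \sup_{\delta<\lambda} 2^\delta$ since exponentiation of ordinals is continuous in the exponent, and since $\lambda$ is a limit, $2^\lambda$ is itself a limit ordinal; moreover $-1 + 2^\delta$ agrees with $2^\delta$ for $\delta$ large enough (once $2^\delta$ is infinite, i.e. once $\delta\ge\omega$, or once $\delta\ge 1$ if we only care about the tail), so $\sup_{\delta<\lambda}(-1+2^\delta) = 2^\lambda$ as well.

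**The computation of $h$ near a limit.** It remains to compute $\sup_{\delta<\lambda} h(\mu_\delta)$ where $\mu_\delta := -1+2^\delta$ increases to the limit $\mu := 2^\lambda$. Here I would invoke Theorem~\ref{higman-type}: I need to know the value of $h$ on a cofinal set of arguments below $\mu$ and that its sup is $\omega^{\omega^{2^\lambda}}$. For all sufficiently large $\delta<\lambda$ the ordinal $\mu_\delta = 2^\delta$ is infinite and is not of the form $\varepsilon+k$ (an epsilon number plus a finite part) — since $2^\delta$ for $\delta$ a successor is not a limit of the right shape, and in any case one can restrict to $\delta$ avoiding the bad set, which is not cofinal in $\lambda$ unless $\mu$ itself is relevant — so the "otherwise" clause gives $h(\mu_\delta) = \omega^{\omega^{\mu_\delta}}$. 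Then $\sup_{\delta<\lambda} \omega^{\omega^{\mu_\delta}} = \omega^{\omega^{\sup_\delta \mu_\delta}} = \omega^{\omega^{\mu}} = \omega^{\omega^{2^\lambda}}$ by continuity of $\beta\mapsto\omega^{\omega^\beta}$ in $\beta$, which matches $u(\lambda)$ exactly.

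**Main obstacle.** The one genuinely delicate point is the interaction with the $\varepsilon+k$ exceptional clause in Theorem~\ref{higman-type}: I must make sure that the arguments $\mu_\delta = -1+2^\delta$ along which I take the sup either all fall in the "otherwise" case, or that whichever case they fall in, $h(\mu_\delta) = \omega^{\omega^{\mu_\delta}}$ or $\omega^{\omega^{\mu_\delta+1}}$ still has sup $\omega^{\omega^{2^\lambda}}$ — and separately, that $u(\lambda)$ was correctly defined as $\omega^{\omega^{2^\lambda}}$ rather than something shifted, i.e. that $2^\lambda$ is never an epsilon number plus a finite part in a way that would force a "$+1$" in the exponent and break continuity. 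I expect this to come down to the observation that $2^\lambda$ is a limit ordinal divisible by $\omega$, hence not of the form $\varepsilon + k$ with $k\ge 1$, and that if $2^\lambda$ itself happens to be an epsilon number the definition of $u$ at limits was chosen (the remark "we've defined this function so as to ensure it is continuous") precisely to absorb this — so a short case check disposes of it.
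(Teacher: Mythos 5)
Your argument is correct and is essentially the paper's: both reduce continuity at a limit $\lambda$ to squeezing $h(-1+2^{\delta})$ between two towers, roughly $\omega^{\omega^{-2+2^{\delta}}}$ and $\omega^{\omega^{2^{\delta}+1}}$, each tending to $\omega^{\omega^{2^{\lambda}}}$ (the paper merely organizes this differently, checking the squeeze only at limits of the form $\beta+\omega$ and invoking continuity of $\beta\mapsto\omega^{\omega^{2^{\beta}}}$ on the set of limit ordinals for the rest). One caveat: your closing dichotomy ``$h(\mu_{\delta})=\omega^{\omega^{\mu_{\delta}}}$ or $\omega^{\omega^{\mu_{\delta}+1}}$'' omits the case of finite nonzero $\mu_{\delta}$, where $h(\mu_{\delta})=\omega^{\omega^{\mu_{\delta}-1}}$ --- which is the \emph{only} case occurring when $\lambda=\omega$ --- and the ``bad set'' of $\delta$ with $-1+2^{\delta}$ of the form $\varepsilon+k$ can in fact be cofinal in $\lambda$ (e.g.\ when $\lambda$ is a limit of epsilon numbers, since then $2^{\varepsilon}=\varepsilon$), so you should run the squeeze with the lower bound $\omega^{\omega^{-2+2^{\delta}}}$ throughout rather than trying to avoid the exceptional clauses.
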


\begin{proof}
On limit ordinals, $u(\beta)=\omega^{\omega^{2^\beta}}$, so it is continuous
restricted to this subset; so it suffices to check that
\[ \lim_{k\to\omega} u(\beta+k) = u(\beta+\omega) \]
when $\beta$ is a limit ordinal or $0$.  Since
\[ \omega^{\omega^{-2+2^{\beta+k-1}}} \le u(\beta+k) \le 
\omega^{\omega^{2^{\beta+k-1}+1}} \]
when $k>0$ and both these bounds tend to $\omega^{\omega^{2^{\beta+\omega}}}$,
it follows that the limit is $\omega^{\omega^{2^{\beta+\omega}}}$, as desired.
\end{proof}

Putting all this together yields the following theorem:

\begin{thm}
\label{lowerbd}
For any ordinal $\beta$, there exists a well partial order $X$ with $o(X)=\beta$
and $o(\finseq{\omega^2}{X})\ge u(\beta)$.  Specifically, we may take
$X=H_\beta$.
\end{thm}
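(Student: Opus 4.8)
The plan is to handle the successor and limit cases of $\beta$ separately: for a successor $\beta$ I would apply Proposition~\ref{genlower} directly, and for a limit $\beta$ I would deduce the bound from the successor case by a continuity argument. The case $\beta=0$ is immediate, since $\finseq{\omega^2}{H_0}=\finseq{\omega^2}{\emptyset}=\{\varepsilon\}$ has type $1=u(0)$.

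\emph{Successor case.} Suppose $\beta$ is a successor. Its Cantor normal form ends in a term $\omega^0$, so by rule~(5) of the construction of $H_\beta$ (or trivially when $\beta=1$) the order $H_\beta$ is a disjoint union one of whose summands is a single point; call it $v_1$. Then $v_1$ is maximal in $H_\beta$, and $Y:=H_\beta\setminus\{v_1\}$ is isomorphic to $H_{\beta-1}$, since deleting that final $\omega^0$ from the Cantor normal form of $\beta$ produces the Cantor normal form of $\beta-1$. Proposition~\ref{genlower} with $k=1$ now supplies an embedding of $\pfinnz(Y)^*$ into $\finseq{\omega^2}{H_\beta}$, so $o(\finseq{\omega^2}{H_\beta})\ge o(\pfinnz(Y)^*)$. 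By Proposition~\ref{haworks}, $o(\pfin{Y})=o(\pfin{H_{\beta-1}})=2^{\beta-1}$, whence $o(\pfinnz(Y))=-1+2^{\beta-1}$, and then Theorem~\ref{higman-type} gives $o(\pfinnz(Y)^*)=h(-1+2^{\beta-1})=u(\beta)$, which is exactly the desired bound.

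\emph{Limit case.} Let $\beta$ be a limit ordinal. For each $\beta'<\beta$, Proposition~\ref{hembed} gives an embedding $H_{\beta'}\hookrightarrow H_\beta$; applying it to strings pointwise produces an order-embedding $\finseq{\omega^2}{H_{\beta'}}\hookrightarrow\finseq{\omega^2}{H_\beta}$ (it preserves and reflects the string ordering and preserves finiteness of images), so $o(\finseq{\omega^2}{H_\beta})\ge o(\finseq{\omega^2}{H_{\beta'}})$. Letting $\beta'$ range over the successor ordinals below $\beta$, which are cofinal in $\beta$, the successor case already proved gives $o(\finseq{\omega^2}{H_\beta})\ge u(\beta')$ for all such $\beta'$, hence $o(\finseq{\omega^2}{H_\beta})\ge\sup_{\beta'<\beta}u(\beta')$; and since $u$ is non-decreasing (clear from its definition, given the monotonicity of $\gamma\mapsto 2^\gamma$ and of $h$) and continuous (shown just above), this supremum equals $u(\beta)$.

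The step I expect to require the most care is the successor case, where two things must be checked: the structural identity $H_\beta\setminus\{v_1\}\cong H_{\beta-1}$, and---more to the point---that the resulting type $h(-1+2^{\beta-1})$ of $\pfinnz(Y)^*$ matches the value $u(\beta)$ exactly, i.e.\ that $u$ was defined precisely to record the output of Proposition~\ref{genlower}. Everything else is bookkeeping with results already in hand (Propositions~\ref{genlower}, \ref{haworks}, \ref{hembed}, monotonicity of $o$ under embeddings, and the continuity of $u$); in particular no transfinite induction is needed---only the direct successor construction together with the continuity passage to limit ordinals.
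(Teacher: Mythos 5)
Your proposal is correct and follows essentially the same route as the paper: the successor case via Proposition~\ref{genlower} applied to $H_\beta\cong H_{\beta-1}\amalg 1$ together with Proposition~\ref{haworks} and Theorem~\ref{higman-type}, and the limit case via Proposition~\ref{hembed} and the continuity of $u$. The only (immaterial) difference is that you invoke cofinality of the successor ordinals below $\beta$ in place of the paper's transfinite induction.
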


\begin{proof}
If $\beta=0$, the claim is trivial.  If $\beta$ is a successor ordinal, write
$\beta=\beta'+1$, so $H_\beta = H_\beta \amalg 1$.  Then by
Proposition~\ref{genlower}, there is an embedding of $\pfinz{H_{\beta'}}^*$ into
$\finseq{\omega^2}{H_\beta}$.  Then $o(\finseq{\omega^2}{H_\beta})\ge
h(-1+2^{\beta'})=u(\beta)$.

Finally, if $\beta$ is a limit ordinal, then we may assume an inductive
hypothesis that the statement is true for all $\beta'<\beta$.  Since
$H_{\beta'}$ embeds in $H_\beta$ by Proposition~\ref{hembed}, we get an
embedding of $\finseq{\omega^2}{H_\beta'}$ into $\finseq{\omega^2}{H_\beta}$.
So
\[
o(\finseq{\omega^2}{H_\beta}) \ge
\sup_{\beta'<\beta} o(\finseq{\omega^2}{H_{\beta'}})\ge u(\beta');
\]
since $u$ is continuous, this means
\[
o(\finseq{\omega^2}{H_\beta}) \ge u(\beta).
\]
This proves the theorem.
\end{proof}

Finally, we also note:

\begin{prop}
\label{lowerbd-indec}
Given an ordinal $\beta>0$, there exists a well partial order $X$ with
$o(X)=\beta$ and $o(\indec{\omega}{X})\ge -1+2^{\beta-1}$ if
$\beta$ is a successor ordinal and $o(\indec{\omega}{X})=-1+2^\beta$ if
$\beta$ is a limit ordinal.
\end{prop}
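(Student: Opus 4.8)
The plan is to take $X=H_\beta$ in every case, exactly as in Theorem~\ref{lowerbd}, and to extract the bound from Proposition~\ref{genlower} in the case $k=1$: that proposition turns any well partial order with a maximal element $v_1$ (setting $Y=X\setminus\{v_1\}$) into an embedding of $\pfinz{Y}$ into $\indec{\omega}{X}$. For limit $\beta$ this will yield only ``$\ge$''; the matching ``$\le$'' comes for free from Theorem~\ref{upper}, since $o(\indec{\omega}{X})\le p_1(o(X))=-1+2^{o(X)}$ for every $X$.

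\emph{Successor case.} Write $\beta=\beta'+1$; the case $\beta=1$ is trivial since the asserted bound $-1+2^0=0$ is vacuous, so assume $\beta>1$. From the definition of $H$ one has $H_\beta=H_{\beta'}\amalg 1$ (the Cantor normal form of $\beta$ is that of $\beta'$ with one more $\omega^0$ appended, so clause~(5) applies), and $o(H_\beta)=\beta$ by Proposition~\ref{haworks}. The point of the adjoined copy of $1$ is incomparable to everything else, hence maximal in $H_\beta$, so Proposition~\ref{genlower} with $k=1$, $v_1$ that point and $Y=H_{\beta'}$ gives an embedding of $\pfinz{H_{\beta'}}$ into $\indec{\omega}{H_\beta}$. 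Hence
\[ o(\indec{\omega}{H_\beta})\ \ge\ o(\pfinz{H_{\beta'}})\ =\ -1+o(\pfin{H_{\beta'}})\ =\ -1+2^{\beta'}\ =\ -1+2^{\beta-1}, \]
using $o(\pfinz{Z})=-1+o(\pfin{Z})$ and Proposition~\ref{haworks}.

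\emph{Limit case.} Fix an infinite limit ordinal $\beta$. For each $\beta''<\beta$ we have $\beta''+1<\beta$, so the successor case gives $o(\indec{\omega}{H_{\beta''+1}})\ge -1+2^{\beta''}$, and by Proposition~\ref{hembed} $H_{\beta''+1}$ embeds in $H_\beta$. An order-embedding $A\hookrightarrow B$ of well partial orders induces an order-embedding $\indec{\omega}{A}\hookrightarrow\indec{\omega}{B}$: a finite-image length-$\omega$ sequence over $A$ is such a sequence over $B$, indecomposability is preserved since it is witnessed by inequalities that transfer into $B$, and a comparison in $\indec{\omega}{B}$ between two $A$-valued sequences reflects back to $A$ because $A$ carries the order induced from $B$. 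Hence $o(\indec{\omega}{H_\beta})\ge -1+2^{\beta''}$ for all $\beta''<\beta$, so
\[ o(\indec{\omega}{H_\beta})\ \ge\ \sup_{\beta''<\beta}\bigl(-1+2^{\beta''}\bigr)\ =\ -1+2^{\beta}, \]
the last equality holding because $\beta\ge\omega$, so $2^\beta=\sup_{\beta''<\beta}2^{\beta''}$ is infinite and the trailing $-1$'s are harmless in the supremum. Finally Theorem~\ref{upper} gives $o(\indec{\omega}{H_\beta})\le p_1(\beta)=-1+2^{q_1(\beta)}=-1+2^{\beta}$, so equality holds.

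The one point needing care is the claim that an embedding $A\hookrightarrow B$ lifts to $\indec{\omega}{A}\hookrightarrow\indec{\omega}{B}$; everything else is a direct assembly of Propositions~\ref{genlower}, \ref{haworks}, \ref{hembed} and Theorem~\ref{upper} along the lines of Theorem~\ref{lowerbd}, and I do not anticipate any genuine obstacle.
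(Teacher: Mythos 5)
Your proposal is correct and follows the same route as the paper: take $X=H_\beta$, use Proposition~\ref{genlower} with $k=1$ on $H_{\beta'}\amalg 1$ in the successor case, pass to the supremum via Proposition~\ref{hembed} in the limit case, and match against the upper bound $p_1(\beta)=-1+2^\beta$ from Theorem~\ref{upper}. The lifting of an alphabet embedding to an embedding of $\indec{\omega}{\cdot}$, which you rightly flag and verify, is exactly the step the paper leaves implicit in ``one takes limits.''
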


\begin{proof}
This follows from Propositions~\ref{genlower} and \ref{haworks} by similar
reasoning to as above; if $\beta$ is a successor ordinal, one removes a maximal
element and applies Proposition~\ref{genlower}, while if it's a limit ordinal,
one takes limits.  In the latter case we must also have equality as here the
lower bound matches the upper bound from Theorem~\ref{upper}.
\end{proof}

We hope that Theorem~\ref{lowerbd} and Proposition~\ref{lowerbd-indec} can be
extended to the case of $\omega^k$ in the future.  Note that the limiting step
here is getting lower bounds on the type of iterations of $\pfinn$ beyond the
first.

\subsection*{Acknowledgements} Thanks to A.~D.~Chopra and to an anonymous
referee for various helpful comments.

\end{document}